\setlist[itemize]{leftmargin=*, itemsep=.25em}
\setlist[enumerate]{leftmargin=*, itemsep=.25em}
\numberwithin{equation}{section}
\newtheorem{theorem}{Theorem}[section]
\newtheorem{proposition}[theorem]{Proposition}
\newtheorem{corollary}[theorem]{Corollary}
\theoremstyle{definition}
\newtheorem{definition}[theorem]{Definition}
\newtheorem{example}[theorem]{Example}
\theoremstyle{remark}
\newcommand{\Iverson}[1]{\left[\,#1\,\right]}
\crefname{theorem}{Teorema}{Teoremas}
\crefname{lemma}{Lema}{Lemas}
\crefname{proposition}{Proposição}{Proposições}
\crefname{corollary}{Corolário}{Corolários}
\crefname{definition}{Definição}{Definições}
\crefname{remark}{Observação}{Observações}
\crefname{example}{Exemplo}{Exemplos}
\crefname{equation}{Equação}{Equações}
\crefname{figure}{Figura}{Figuras}
\crefname{table}{Tabela}{Tabelas}
\begin{document}
\title[Basis inversion in $\lambda$-recursive families]%
{Basis inversion in $\lambda$-recursive families: triangular kernels and polynomial basis changes}
\author{Wanderson Matos}
\address{Independent Author, Cáceres--MT, Brazil}
\email{wanderson.matos.e.silva@gmail.com}
%\subjclass[2020]{Primary 05A, 15A; Secondary 13F}
%\keywords{Sequências recorrentes bidimensionais, Famílias polinomiais, Inversão algébrica de base, Determinantes simbólicos}

\begin{abstract}
Polynomial families $\{f_n(x)\}_{n\ge 0}$ over a commutative ring $R$ are encoded by a triangular array of order $m$, supported on indices $0\le b\le \lfloor n/m\rfloor$, via
\[
f_{n}(x)=\sum_{b=0}^{\lfloor n/m\rfloor}\lambda_1(n,b)\,x^{\,n-mb},
\]
where $\lambda_1$ is the \emph{direct kernel} and satisfies $\lambda_1(n,b)=0$ for $b<0$ or $b>\lfloor n/m\rfloor$. On the combinatorial side, reindexing yields a central change-of-basis result: under a simple discrete orthogonality condition, there exists a unique \emph{inverse kernel} $\lambda_3$ (a triangular array of order $m$) such that
\[
x^n=\sum_{b=0}^{\lfloor n/m\rfloor}\lambda_3(n,b)\,f_{n-mb}(x).
\]
This reindexing mechanism produces explicit change-of-basis formulas between two families, even with distinct step sizes $m_1$ and $m_2$; once the inverse kernel is known, the connection coefficients are obtained from a single universal sum indexed by the triangular support.

On the algebraic side, $\lambda_1$ determines a lower Hessenberg matrix $\mathcal M_{(n,k)}$, called the \emph{algebraic expansion matrix}, whose determinant governs inversion: for each fixed $n$ and $1\le k\le\lfloor n/m\rfloor$, the coefficient $\lambda_3(n,k)$ admits a closed determinantal formula in terms of the direct kernel $\lambda_1$.

From direct kernels, the notion of a lambda-recursive sequence of order $m$ is introduced, specified by a principal factor $(p_n)$ and auxiliary factors $(h_{(n,k)})$. In this framework, $\det(\mathcal M_{(n,k)})$ satisfies a recurrence whose consequences enable direct computation of coefficients in polynomial basis changes. In particular, for lambda-recursive coefficients of order $m$, recurrences are derived for the inverse kernel $\lambda_3(n,k)$,
\[
\lambda_{3}(n,k)=
\begin{cases}
   \dfrac{1}{\lambda_{1}(n,0)}, & k=0,\\[8pt]
    A(n,k)\,\lambda_{3}(n-1,k)
    +\dfrac{h_{s}}{p_{s}}\;\lambda_{3}(n-1,k-1), & 1\le k\le w_m(n),
\end{cases}
\]
where $A(n,k)$ is defined in terms of the boundary factor; analogously, the change-of-basis coefficients satisfy
\[
z(n,k)=
p^{(1)}_{n}A_\mu(n,k)\,z(n-1,k)
+p^{(1)}_{n}\frac{h^{(2)}_{s}}{p^{(2)}_{s}}\,z(n-1,k-1)
-h^{(1)}_{n}\,z(n-m,k-1),
\]
exhibiting a bidirectional structure.

Several classical families (Chebyshev, Legendre, Hermite, Laguerre, Fibonacci, Lucas, among others) fit naturally into this lambda-recursive framework, so that their connection coefficients and change-of-basis coefficients are unified by the same computation on triangular arrays. The same structure supports structured versions of Clenshaw-type schemes, interpolation in adapted bases, and further algebraic reinterpretations, to be developed in future work.
\end{abstract}

\maketitle

\section{Introduction}

Triangular arrays of coefficients between polynomial bases arise in numerous contexts within algebraic combinatorics and the theory of special polynomials, including connection coefficients between orthogonal families, basis changes between $\{x^n\}$ and classical families, and identities encoded in triangular arrays such as Riordan matrices. In these settings, the essential information is carried by triangular matrices of coefficients that perform inversions or basis changes and satisfy highly structured discrete recurrences; see, for instance, \cite{Riordan1968,Shapiro1991,HeSprugnoli2009} for the Riordan-array viewpoint, and \cite{Ronveaux1995,Godoy1997,Area1998,Lewanowicz2000,Maroni2008} for recurrences of connection coefficients between orthogonal polynomials.

In the classical case of orthogonal polynomials in one variable, the standard starting point is a three-term recurrence in $n$ for a sequence $\{p_n\}$, from which connection formulas with the monomial basis and between different special families are deduced; cf.\ \cite{Szego1975,Chihara1978,Ismail2005,KoekoekLeskySwarttouw2010}. In this work, a complementary viewpoint is adopted: the starting point consists directly of polynomial families
\[
f_{n}(x)=\sum_{b=0}^{w_m(n)}\lambda_1(n,b)\,x^{\,n-mb},\qquad w_m(n)=\big\lfloor n/m\big\rfloor,
\]
whose exponents follow a triangular pattern of order $m\ge1$. For each $n$, the terms of $f_{n}(x)$ are powers $x^{\,n-mb}$ for $0\le b\le w_m(n)$, which organizes the coefficients $\lambda_1(n,b)$ into a triangular array. This triangular support $n-mb$ is treated systematically (where $m=1$ represents the classical subcase), and the theory is organized around two doubly indexed kernels: the direct kernel $\lambda_1$ and the inverse kernel $\lambda_3$.

It is shown that the inversion
\[
x^n=\sum_{b=0}^{w_m(n)}\lambda_3(n,b)\,f_{n-mb}(x)
\]
is equivalent to discrete orthogonality relations between $\lambda_1$ and $\lambda_3$. Furthermore, the inverse kernel is unique whenever it exists, and its coefficients admit a determinantal description in terms of a lower Hessenberg matrix associated with the family. In particular, a first set of \emph{direct} formulas is obtained for the inversion coefficients $x^n\leftrightarrow f_n$ in terms of triangular determinants depending on $n$.

The second axis of this article concerns the class of \emph{lambda-recursive families of order $m$}, in which the direct kernel satisfies a two-dimensional recurrence of the form
\[
\lambda_1(n,k)=p_n\,\lambda_1(n-1,k)\;-\;h_{(n,k)}\,\lambda_1(n-m,k-1),
\]
with initial data on the boundary $k=0$. Under natural assumptions, it is shown that the boundary sequence $\lambda_1(n,0)$ is determined by the initial values $\lambda_1(0,0),\dots,\lambda_1(m-1,0)$ and by the principal factor $(p_n)$, and closed recurrences, compatible with the triangular support, are derived for the coefficients of the inverse kernel $\lambda_3(n,k)$. Moreover, when two lambda-recursive families $f$ and $g$ have compatible parameters, it is shown that the change-of-basis coefficients $f\leftrightarrow g$ also satisfy explicit triangular recurrences, obtained mechanically from $\lambda_1$, $\mu_1$, and the corresponding inverse kernels. These local recursive formulas provide a systematic mechanism for computing inversions and basis changes for arbitrary order $m$.

Several classical systems (including Chebyshev, Legendre, Hermite, Laguerre, Fibonacci, and Lucas) naturally fit into this lambda-recursive framework with triangular support $n-mb$, allowing different types of connection and inversion coefficients to be unified and organized into triangular arrays of order $m$. The kernel-based approach highlights which data (boundary, principal factor, and auxiliary factors $h_{(n,k)}$) control the inversion and how compatible deformations preserve the change-of-basis pattern. The same structure suggests structured versions of Clenshaw-type schemes, as well as applications to interpolation in adapted bases and algebraic reinterpretations of the algebraic expansion matrix, topics that will be explored in future work.

The organization of the article is as follows. In Section~\ref{sec2}, notation is fixed, the triangular support of order $m$ is introduced, and staircase reindexation identities are established, culminating in a general inversion theorem between $\{x^n\}$ and a triangular family. In Section~\ref{sec3}, the algebraic expansion matrix is defined, lambda-recursive sequences of order $m$ are introduced, and the determinantal recurrence for the lower Hessenberg matrix is obtained, together with the boundary theorem and closed recurrences for $\lambda_3(n,k)$. Next, in Section~\ref{sec:inv}, these results are specialized to the basis inversion problem for lambda-recursive families and to recursive formulas for the change-of-basis coefficients between two families. Finally, Section~\ref{sec:conc} gathers conclusions and indicates algebraic and algorithmic developments for future work.

\section{Preliminaries}\label{sec2}

\subsection{Notation}

\subsection*{General conventions}

Throughout this work, the following are fixed:
\begin{itemize}
    \item $\mathbb{N}=\{0,1,2,\dots\}$ and $\mathbb{N}_+=\{1,2,3,\dots\}$;
    \item an integer $m\in\mathbb{N}_+$, called the \emph{decay parameter};
    \item a commutative ring $R$ with unity, in which all coefficients are defined;
    \item the polynomial ring $R[x]$ in one variable.
\end{itemize}

For each $n\in\mathbb{N}$, define
\[
w_m(n)=\left\lfloor \frac{n}{m} \right\rfloor.
\]

\begin{definition}[Triangular support of order $m$]\label{def:suporte-triangular}
Let $m\in\mathbb N_+$. A doubly indexed family
\[
\lambda:\mathbb{N}\times\mathbb{Z}\to R,\qquad \lambda=\{\lambda(n,k)\}_{n\ge0,\,k\in\mathbb{Z}},
\]
is said to have \emph{triangular support (of order $m$)} if, for all $n\in\mathbb N$ and $k\in\mathbb Z$,
\[
\lambda(n,k)=0\quad\text{whenever}\quad k<0\ \text{or}\ k>w_m(n),
\]
where $w_m(n)=\big\lfloor n/m\big\rfloor$.
If a family $\lambda$ has triangular support of order $m$, write $\lambda\in\Delta_m$.
\end{definition}

Whenever doubly indexed families $\lambda_1,\lambda_3:\mathbb{N}\times\mathbb{Z}\longrightarrow R$ appear,
it is assumed, unless otherwise stated, that $\lambda_j$ has triangular support of order $m$ for $j \in \{1,3\}$.

Throughout, polynomial families in $R[x]$ whose coefficients obey this triangular pattern are considered.
Accordingly, let
$\mathcal{F} = \{f_n(x)\}_{n \in \mathbb{N}} \subset R[x]$ be a family of polynomials whose exponents of $x$ belong to the set
$\{\,n - km : 0 \leq k \leq w_{m}(n)\,\}$. In this context, representations of the form
\[
f_n(x) = \sum_{b=0}^{w_{m}(n)} \lambda_1(n,b)\, x^{n - mb},
\]
are admitted, where $\lambda_1=\{\lambda_1(n,b)\}$ has triangular support of order $m$
(that is, $\lambda_1\in\Delta_m$). Moreover, it is assumed that each $f_n$ has degree exactly $n$,
so that the leading coefficient satisfies
\[
\lambda_1(n,0)\neq 0,\qquad n\ge0.
\]
The family $\lambda_1$ is called the \emph{direct kernel} associated with $\mathcal{F}$
(or simply the \emph{kernel}, when there is no ambiguity).
In sections where recursive formulas involving divisions by $\lambda_1(n,0)$ are used,
it is tacitly assumed that
\[
\lambda_1(n,0)\in R^{\times}\quad\text{for all }n\ge0,
\]
that is, the boundary coefficients are units in $R$.

\subsection{Reindexation principles}\label{subsec:reindex}

The Iverson bracket is used, denoted by $\Iverson{P}$, which equals $1$ if the proposition $P$ is true and $0$ otherwise.

\begin{proposition}[Convolution-type reindexation]\label{prop:conv-reindex}
Let $u\in\mathbb{N}$ and let $\{a_{j,k}\}$ be an arbitrary family. Then the following identity holds
(reindexed from \cite[Eq.~(2.32)]{GKP1994}):
\[
\sum_{k=0}^{u}\ \sum_{b=0}^{k} a_{b,k}
\;=\;
\sum_{b=0}^{u}\ \sum_{c=0}^{u-b} a_{\,b,\,b+c}.
\]
\end{proposition}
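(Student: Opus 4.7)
The plan is to treat the identity as a Fubini-style interchange of summation followed by an affine change of variables. First, I would rewrite the left-hand side by swapping the order of summation: the constraints $0\le b\le k\le u$ cut out a triangular region in $\mathbb{N}^{2}$ that can be traversed either by letting $k$ run over $[0,u]$ with $b\in[0,k]$, or by letting $b$ run over $[0,u]$ with $k\in[b,u]$. Because the sum is finite, this interchange is purely formal and yields
\[
\sum_{k=0}^{u}\sum_{b=0}^{k} a_{b,k}
=\sum_{b=0}^{u}\sum_{k=b}^{u} a_{b,k}.
\]

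The second step is the substitution $c=k-b$ in the inner sum. As $k$ ranges over the integers in $[b,u]$, the new index $c$ ranges over the integers in $[0,u-b]$, and the summand $a_{b,k}$ transforms into $a_{b,b+c}$. This produces
\[
\sum_{b=0}^{u}\sum_{k=b}^{u} a_{b,k}
=\sum_{b=0}^{u}\sum_{c=0}^{u-b} a_{b,b+c},
\]
which is precisely the right-hand side.

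The only point that needs attention — and the natural place to make the argument rigorous — is to verify that the map $(b,k)\mapsto (b,k-b)$ is a bijection between the index sets
$\{(b,k)\in\mathbb{N}^{2}: 0\le b\le k\le u\}$ and $\{(b,c)\in\mathbb{N}^{2}: 0\le b\le u,\ 0\le c\le u-b\}$; this is immediate from the inverse $(b,c)\mapsto(b,b+c)$, and both sides enumerate the same triangle of cardinality $\binom{u+2}{2}$. Alternatively, one can encode both iterated sums as $\sum_{b,k\ge 0}\Iverson{0\le b\le k\le u}\,a_{b,k}$, in which form the identity reduces to the observation that the two iteration orders exhaust the same support of the Iverson bracket. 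There is no substantive obstacle: the proposition is finite Fubini combined with an integer translation in a single variable, and I would keep the write-up at that level of detail.
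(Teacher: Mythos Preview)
Your proof is correct and essentially identical to the paper's: both recognize the triangular index region $0\le b\le k\le u$ and apply the bijection $(b,k)\mapsto(b,k-b)$, the paper encoding this via Iverson brackets and you via an explicit Fubini interchange followed by the same substitution. There is no meaningful difference in approach.
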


\begin{proof}
\[
\begin{aligned}
S
&= \sum_{k,b} a_{b,k}\,\Iverson{0\le b\le k\le u}
\\[2mm]
&= \sum_{k,b} a_{b,k}\,\Iverson{0\le b\le u}\,\Iverson{0\le k-b\le u-b}
\\[2mm]
&= \sum_{b,c} a_{\,b,\,b+c}\,\Iverson{0\le b\le u}\,\Iverson{0\le c\le u-b}
\quad\text{($c=k-b\ \Rightarrow\ k=b+c$).}
\end{aligned}
\]
Since both expressions define the same $S$, the identity follows.
\end{proof}

\subsection{Inversion under triangular support}\label{subsec:inv-triangular}

Let $\mathcal{F}=\{f_n(x)\}_{n\in\mathbb{N}}$ be defined by
\begin{equation}\label{eq:def-familia}
    f_n(x)=\sum_{b=0}^{w_m(n)} \lambda_1(n,b)\,x^{n-mb},
\end{equation}
where $m\in\mathbb{N}_+$ is the decay parameter associated with $w_m$.
Assume that, for all $n\in\mathbb{N}$,
\[
\lambda_1(n,0)\,\lambda_3(n,0)=1.
\]

\begin{theorem}[Inversion between $\{x^n\}$ and $\{f_n\}$]\label{thm:inversao-direta-base}
Under the above assumptions, the following are equivalent:
\begin{enumerate}[label=(\roman*)]
    \item for all $1\le k\le w_m(n)$,
    \begin{equation}\label{eq:ortogonalidade-discreta}
        \sum_{b=0}^{k}\lambda_3(n,b)\,\lambda_1(n-mb,k-b)=0;
    \end{equation}
    \item the inversion identity
    \begin{equation}\label{eq:inversao-geral}
        x^n=\sum_{b=0}^{w_m(n)} \lambda_{3}(n,b)\,f_{n-mb}(x).
    \end{equation}
\end{enumerate}
\end{theorem}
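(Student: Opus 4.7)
The proof will be a direct computation: I will expand the right-hand side of \eqref{eq:inversao-geral}, apply \Cref{prop:conv-reindex} to regroup the resulting double sum by total shift, and then read off conditions (i) and (ii) as coefficient-wise identities in $R[x]$.

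\textbf{Step 1: expansion and substitution.} Setting $u=w_m(n)$, I substitute \eqref{eq:def-familia} into the RHS of \eqref{eq:inversao-geral} to obtain
\[
\sum_{b=0}^{u}\lambda_3(n,b)\,f_{n-mb}(x)
=\sum_{b=0}^{u}\sum_{c=0}^{w_m(n-mb)}\lambda_3(n,b)\,\lambda_1(n-mb,c)\,x^{\,n-m(b+c)}.
\]
Since $mb$ is a multiple of $m$, the key arithmetic identity $w_m(n-mb)=w_m(n)-b=u-b$ holds; this is what makes the inner index range match the hypothesis of \Cref{prop:conv-reindex}.

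\textbf{Step 2: reindexation by total shift.} With $a_{b,k}:=\lambda_3(n,b)\,\lambda_1(n-mb,k-b)\,x^{\,n-mk}$, the inner substitution $k=b+c$ (so $c=k-b$) rewrites the display above as
\[
\sum_{b=0}^{u}\sum_{c=0}^{u-b} a_{b,b+c}
=\sum_{k=0}^{u}\sum_{b=0}^{k} a_{b,k}
=\sum_{k=0}^{w_m(n)}\Bigl(\sum_{b=0}^{k}\lambda_3(n,b)\,\lambda_1(n-mb,k-b)\Bigr)x^{\,n-mk},
\]
where the first equality is \Cref{prop:conv-reindex}. Denote the inner sum by $C(n,k)$, so that
\[
\sum_{b=0}^{w_m(n)}\lambda_3(n,b)\,f_{n-mb}(x)=\sum_{k=0}^{w_m(n)} C(n,k)\,x^{\,n-mk}.
\]

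\textbf{Step 3: extracting coefficients.} The powers $x^{n-mk}$ for $0\le k\le w_m(n)$ are pairwise distinct, so the above sum equals $x^n$ in $R[x]$ if and only if $C(n,0)=1$ and $C(n,k)=0$ for every $1\le k\le w_m(n)$. The coefficient $C(n,0)=\lambda_3(n,0)\,\lambda_1(n,0)$ equals $1$ by the standing hypothesis $\lambda_1(n,0)\,\lambda_3(n,0)=1$, independently of (i) or (ii). Consequently, (ii) is equivalent to the vanishing $C(n,k)=0$ for $1\le k\le w_m(n)$, which is precisely \eqref{eq:ortogonalidade-discreta}. This establishes (i) $\iff$ (ii).

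\textbf{Anticipated obstacle.} The only non-routine point is verifying that the inner summation range $0\le c\le w_m(n-mb)$ collapses to $0\le c\le u-b$, since otherwise \Cref{prop:conv-reindex} would not apply cleanly. This hinges on $m\mid mb$, which is why the statement is framed around the triangular support of order $m$ rather than a more general shift. Once this identity is recorded, the rest is bookkeeping; no delicate algebraic manipulation is required beyond linear independence of distinct monomials in $R[x]$.
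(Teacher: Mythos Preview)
Your proof is correct and follows essentially the same route as the paper's own argument: expand $f_{n-mb}$ via \eqref{eq:def-familia}, use $w_m(n-mb)=w_m(n)-b$ to put the double sum in the form required by \Cref{prop:conv-reindex}, regroup by the total shift $k=b+c$, and then compare coefficients of the linearly independent monomials $x^{n-mk}$, with the $k=0$ term handled by the standing normalization $\lambda_1(n,0)\lambda_3(n,0)=1$. The only cosmetic difference is that the paper splits the implications (i)$\Rightarrow$(ii) and (ii)$\Rightarrow$(i) explicitly, whereas you phrase the equivalence in one stroke via the coefficient condition $C(n,k)=0$.
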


\begin{proof}
For each $b\ge 0$,
\[
\lambda_3(n,b)f_{n-mb}(x)
=\lambda_3(n,b)\sum_{c=0}^{w_m(n-mb)}\lambda_1(n-mb,c)\,x^{n-mb-mc},
\]
and summing over $b=0,\dots,w_m(n)$ yields
\[
\sum_{b=0}^{w_m(n)}\lambda_{3}(n,b)f_{n-mb}(x)
=\sum_{b=0}^{w_m(n)}\sum_{c=0}^{w_m(n-mb)}\lambda_{3}(n,b)\,\lambda_{1}(n-mb,c)\,x^{n-mb-mc}.
\]
Using the identity $w_m(n-mb)=w_m(n)-b$ and applying Proposition~\ref{prop:conv-reindex}, this becomes
\begin{equation}\label{eq:soma-reindexada}
\begin{split}
\sum_{b=0}^{w_m(n)}\lambda_{3}(n,b)f_{n-mb}(x)
= \sum_{k=0}^{w_m(n)} \,x^{n-mk} \sum_{b=0}^{k}\lambda_{3}(n,b)\,\lambda_{1}(n-mb,k-b)\\
= \lambda_{3}(n,0)\lambda_{1}(n,0)\,x^{n}
  + \sum_{k=1}^{w_m(n)} \,x^{n-mk} \sum_{b=0}^{k}\lambda_{3}(n,b)\,\lambda_{1}(n-mb,k-b).
\end{split}
\end{equation}

\emph{(i) $\Rightarrow$ (ii).}
The coefficient of $x^{n}$ in \eqref{eq:soma-reindexada} (the case $k=0$) is
$\lambda_3(n,0)\lambda_1(n,0)=1$ and, if \eqref{eq:ortogonalidade-discreta} holds,
the coefficients for $1\le k\le w_m(n)$ vanish. Therefore, from \eqref{eq:soma-reindexada} it follows that
\[
x^n=\sum_{b=0}^{w_m(n)} \lambda_{3}(n,b)\,f_{n-mb}(x),
\]
that is, \eqref{eq:inversao-geral}.

\medskip
\emph{(ii) $\Rightarrow$ (i).}
If \eqref{eq:inversao-geral} holds, then comparing with \eqref{eq:soma-reindexada} yields
\[
\sum_{k=1}^{w_m(n)}x^{n-mk}\left(\sum_{b=0}^{k}\lambda_{3}(n,b)\,\lambda_{1}(n-mb,k-b)\right)=0.
\]
Since the monomials $\{x^{n-mk}\}_{k\ge 1}$ are linearly independent, for all
$1\le k\le w_m(n)$,
\[
\sum_{b=0}^{k}\lambda_{3}(n,b)\,\lambda_{1}(n-mb,k-b)=0,
\]
that is, \eqref{eq:ortogonalidade-discreta}.
\end{proof}

\begin{example}[Computational example with random coefficients]\label{ex:m1n6-visual-vazio}
The parameters $m=1$, $n=6$, and random coefficients in $\{-7,\cdots,-1,1,\cdots,7\}$ are used.
Since $w_{1}(6)=6$, the orthogonality recurrence
\[
\lambda_{3}(n,k)
=
-\sum_{b=0}^{k-1}\frac{\lambda_{3}(n,b)\,\lambda_{1}(n-mb,k-b)}{\lambda_{1}(n-mk,0)}
\]
(computed using \texttt{Python}) yields
\[
(\lambda_3(6,k))_{0\le k \le 6} =\bigl(1,\ 3,\ -1,\ 4,\ 26,\ 25,\ -\tfrac{40}{7}\bigr).
\]

\begin{table}[ht]
\centering
\caption{The family $\{f_u\}_{u=0}^{6}$ with random coefficients.}
\label{tab:familia-m1n6-pol-vazio}
\renewcommand{\arraystretch}{1.2}
\setlength{\tabcolsep}{6pt}
\begin{tabular}{c|ccccccc}
\hline
$n$ & $x^6$ & $x^5$ & $x^4$ & $x^3$ & $x^2$ & $x$ & $1$\\
\hline
$0$ &  &  &  &  &  &  & $\phantom{+}7$\\
$1$ &  &  &  &  &  & $-6x$ & $\phantom{+}6$\\
$2$ &  &  &  &  & $\phantom{+}x^2$ & $\phantom{+}5x$ & $-5$\\
$3$ &  &  &  & $\phantom{+}6x^3$ & $-6x^2$ & $\phantom{+}2x$ & $\phantom{+}3$\\
$4$ &  &  & $-7x^4$ & $-3x^3$ & $\phantom{+}6x^2$ & $-4x$ & $-4$\\
$5$ &  & $-2x^5$ & $-3x^4$ & $-7x^3$ & $\phantom{+}x^2$ & $\phantom{+}x$ & $\phantom{+}1$\\
$6$ & $\phantom{+}x^6$ & $\phantom{+}6x^5$ & $\phantom{+}2x^4$ & $-6x^3$ & $\phantom{+}x^2$ & $\phantom{+}5x$ & $\phantom{+}1$\\
\hline
\end{tabular}
\end{table}

\begin{table}[ht]
\centering
\caption{Visualization of $x^n=\sum_{b=0}^{w_m(n)}\lambda_3(n,b)\,f_{n-mb}(x)$.}
\label{tab:inversao-m1n6-pol-vazio}
\renewcommand{\arraystretch}{1.2}
\setlength{\tabcolsep}{6pt}
\begin{tabular}{l|ccccccc}
\hline
Term & $x^6$ & $x^5$ & $x^4$ & $x^3$ & $x^2$ & $x$ & $1$\\
\hline
$\lambda_3(6,0)f_{6}(x)$ & $\phantom{+}x^6$ & $\phantom{+}6x^5$ & $\phantom{+}2x^4$ & $-6x^3$ & $\phantom{+}x^2$ & $\phantom{+}5x$ & $\phantom{+}1$\\
$\lambda_3(6,1)f_{5}(x)$ &  & $-6x^5$ & $-9x^4$ & $-21x^3$ & $\phantom{+}3x^2$ & $\phantom{+}3x$ & $\phantom{+}3$\\
$\lambda_3(6,2)f_{4}(x)$ &  &  & $\phantom{+}7x^4$ & $\phantom{+}3x^3$ & $-6x^2$ & $\phantom{+}4x$ & $\phantom{+}4$\\
$\lambda_3(6,3)f_{3}(x)$ &  &  &  & $\phantom{+}24x^3$ & $-24x^2$ & $\phantom{+}8x$ & $\phantom{+}12$\\
$\lambda_3(6,4)f_{2}(x)$ &  &  &  &  & $\phantom{+}26x^2$ & $\phantom{+}130x$ & $-130$\\
$\lambda_3(6,5)f_{1}(x)$ &  &  &  &  &  & $-150x$ & $\phantom{+}150$\\
$\lambda_3(6,6)f_{0}(x)$ &  &  &  &  &  &  & $-40$\\
\hline
Sum & $\phantom{+}x^6$ &  &  &  &  &  & \\
\hline
\end{tabular}
\end{table}
\end{example}

\begin{definition}[Inverse kernel]\label{def:nucleo-inverso}
Let $m\in\mathbb{N}_+$ and let
$\mathcal{F}=\{f_n(x)\}_{n\in\mathbb{N}}\subset R[x]$
be a polynomial family with triangular support of order $m$, admitting a
representation
\[
f_n(x)=\sum_{b=0}^{w_m(n)}\lambda_1(n,b)\,x^{n-mb},
\qquad \lambda_1\in\Delta_m,
\]
whose direct kernel is $\lambda_1$.
A doubly indexed family $\lambda_3\in\Delta_m$ is called the
\emph{inverse kernel} associated with $\mathcal{F}$ if, for all
$n\in\mathbb{N}$,
\begin{equation}\label{eq:def-nucleo-inverso}
    x^n \;=\; \sum_{k=0}^{w_m(n)} \lambda_3(n,k)\,f_{n-mk}(x).
\end{equation}
\end{definition}

Whenever such a family $\lambda_3$ exists, $\mathcal{F}$ is said to admit an
\emph{inverse kernel of order $m$}. Equivalently, $\lambda_1$ and $\lambda_3$
implement a triangular change of basis between $\{x^n\}$ and $\{f_n\}$.

\begin{theorem}[Uniqueness of inversion coefficients]\label{thm:unicidade-inversao-indutiva}
Fix $m\in\mathbb{N}_+$ and let $w_m(n)=\lfloor n/m\rfloor$. 
Let $\mathcal{G}=\{g_n(x)\}_{n\ge0}$ be given by
\[
g_n(x)=\sum_{c=0}^{w_m(n)} \mu_1(n,c)\,x^{\,n-mc},
\qquad
\mu_1(n,0)\in R^\times\ \text{ for all }n.
\]
If two families $\{\mu_3^{(1)}(n,k)\}$ and $\{\mu_3^{(2)}(n,k)\}$ satisfy, for all $n$,
\[
x^n=\sum_{k=0}^{w_m(n)} \mu_3^{(1)}(n,k)\,g_{n-mk}(x)
\;=\;
\sum_{k=0}^{w_m(n)} \mu_3^{(2)}(n,k)\,g_{n-mk}(x),
\]
then $\mu_3^{(1)}(n,k)=\mu_3^{(2)}(n,k)$ for all admissible indices. In particular, the inversion coefficients are unique whenever they exist.
\end{theorem}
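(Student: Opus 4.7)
The plan is to fix $n$ and prove uniqueness along the triangular support $0\le k\le w_m(n)$ by induction on $k$. The key reduction is \cref{thm:inversao-direta-base}, applied with $\mu_1$ in place of $\lambda_1$: each of the two hypothesized inversion identities is equivalent to the pair consisting of the boundary normalization $\mu_3^{(i)}(n,0)\,\mu_1(n,0)=1$ and the discrete orthogonality relation
\[
\sum_{b=0}^{k}\mu_3^{(i)}(n,b)\,\mu_1(n-mb,k-b)=0\qquad(1\le k\le w_m(n)).
\]
Because $\mu_1(n,0)\in R^{\times}$, the normalization already forces $\mu_3^{(1)}(n,0)=\mu_3^{(2)}(n,0)=\mu_1(n,0)^{-1}$, which handles the base case.

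For the inductive step, isolate the $b=k$ term in the orthogonality relation to obtain
\[
\mu_3^{(i)}(n,k)\,\mu_1(n-mk,0)=-\sum_{b=0}^{k-1}\mu_3^{(i)}(n,b)\,\mu_1(n-mb,k-b).
\]
The unit hypothesis on $\mu_1(n-mk,0)$ makes this a closed-form expression for $\mu_3^{(i)}(n,k)$ purely in terms of the values $\mu_3^{(i)}(n,b)$ with $b<k$. The induction hypothesis ensures that those earlier values coincide for $i=1,2$, so $\mu_3^{(1)}(n,k)=\mu_3^{(2)}(n,k)$ follows immediately.

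An equivalent polynomial-side derivation is worth recording for intuition: setting $\delta(n,k)=\mu_3^{(1)}(n,k)-\mu_3^{(2)}(n,k)$ and subtracting the two inversion identities yields $\sum_{k=0}^{w_m(n)}\delta(n,k)\,g_{n-mk}(x)=0$. Since the polynomials $g_{n-mk}$ have strictly decreasing degrees $n,n-m,n-2m,\dots$ with leading coefficients $\mu_1(n-mk,0)\in R^{\times}$, matching coefficients of $x^{n-mj}$ in ascending $j$ forces $\delta(n,j)=0$ one index at a time, which is the same induction transcribed to the polynomial side. No serious obstacle is anticipated: the only point of care is that the triangular support of $\mu_3^{(i)}$ truly truncates each sum at $b=k$, so that the recursion closes without hidden contributions from indices outside $[0,w_m(n)]$, and the whole argument rests solely on the unit condition at the boundary, without requiring any additional structure on $R$.
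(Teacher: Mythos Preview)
Your proposal is correct. The ``polynomial-side derivation'' you record at the end is precisely the paper's own argument: subtract the two inversion identities and peel off the coefficients of $x^{n-tm}$ in increasing $t$, using that $g_{n-mt}$ has invertible leading coefficient $\mu_1(n-mt,0)$ while all $g_{n-mk}$ with $k>t$ have strictly smaller degree.

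Your primary route via \cref{thm:inversao-direta-base} is a legitimate and essentially equivalent alternative, but there is one small wrinkle worth tightening. As formulated in the paper, \cref{thm:inversao-direta-base} is stated under the \emph{standing hypothesis} $\mu_1(n,0)\,\mu_3(n,0)=1$ and asserts the equivalence (orthogonality)$\Leftrightarrow$(inversion); it does not, as written, say that the inversion identity alone implies the normalization. So to invoke it cleanly you should first extract the coefficient of $x^n$ from the hypothesized identity $x^n=\sum_k \mu_3^{(i)}(n,k)\,g_{n-mk}(x)$ to obtain $\mu_3^{(i)}(n,0)\,\mu_1(n,0)=1$, and only then apply the theorem to get the orthogonality relations. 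With that one-line adjustment, the induction on $k$ you describe goes through exactly as stated.
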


\begin{proof}
Fix $n$ and consider
\[
0
=\sum_{k=0}^{w_m(n)} \bigl(\mu_3^{(1)}(n,k)-\mu_3^{(2)}(n,k)\bigr)\,g_{n-mk}(x).
\]
By the expansion of $g_n$,
\[
g_n(x)=\mu_1(n,0)\,x^n+\text{(degrees }\le n-m),
\qquad
g_{n-mk}(x)\ \text{has degree }\le n-m\ \ (k\ge1).
\]
Therefore, the coefficient of $x^n$ in the identity above is
\[
\bigl(\mu_3^{(1)}(n,0)-\mu_3^{(2)}(n,0)\bigr)\,\mu_1(n,0)=0,
\]
and since $\mu_1(n,0)\in R^\times$, it follows that $\mu_3^{(1)}(n,0)=\mu_3^{(2)}(n,0)$.

For $t=1,\dots,w_m(n)$, assume inductively that
$\mu_3^{(1)}(n,k)=\mu_3^{(2)}(n,k)$ for all $k<t$. Cancelling the terms with $k<t$ in the initial identity yields
\[
0
=\sum_{k=t}^{w_m(n)} \bigl(\mu_3^{(1)}(n,k)-\mu_3^{(2)}(n,k)\bigr)\,g_{n-mk}(x).
\]
Now consider the coefficient of $x^{\,n-tm}$. One has
\[
g_{n-mt}(x)=\mu_1(n-mt,0)\,x^{\,n-tm}+\text{(degrees }\le n-(t+1)m),
\]
while every $g_{n-mk}(x)$ with $k>t$ has degree $\le n-(t+1)m$ and does not contribute. Thus,
\[
\bigl(\mu_3^{(1)}(n,t)-\mu_3^{(2)}(n,t)\bigr)\,\mu_1(n-mt,0)=0,
\]
and since $\mu_1(n-mt,0)\in R^\times$, it follows that $\mu_3^{(1)}(n,t)=\mu_3^{(2)}(n,t)$.

Induction on $t$ closes for all $0\le t\le w_m(n)$, and since $n$ is arbitrary, uniqueness holds at all levels.
\end{proof}

\begin{theorem}[Decomposition into residue classes]\label{thm:decomp-classes}
Fix $m\in\mathbb N_+$ and write $w_m(n)=\lfloor n/m\rfloor$.
Assume that $\{f_n(x)\}_{n\ge0}$ admits a direct kernel $\lambda_1$ of order $m$, that is,
\[
f_n(x)
=
\sum_{b=0}^{w_m(n)} \lambda_1(n,b)\,x^{\,n-mb},
\qquad n\ge0.
\]
Under the hypotheses of Theorem~\ref{thm:inversao-direta-base}, let $\lambda_3$ be the corresponding inverse kernel,
so that \eqref{eq:ortogonalidade-discreta} and \eqref{eq:inversao-geral} hold.

For each residue $r\in\{0,1,\dots,m-1\}$, write $n=mk+r$ with $k\in\mathbb N$ and define
\[
f^{\langle r\rangle}_k(x)=f_{mk+r}(x),\qquad k\ge0.
\]
Define further, for $0\le t\le k$,
\[
\lambda_1^{\langle r\rangle}(k,t)
\;=\;
\lambda_1(mk+r,\,k-t),
\qquad
\lambda_3^{\langle r\rangle}(k,t)
\;=\;
\lambda_3(mk+r,\,t).
\]

Then, for each residue $r$, the following hold:
\begin{enumerate}[label=(\alph*)]
    \item (Classwise direct kernel) For all $k\ge0$,
  \begin{equation}\label{eq:classe-direta}
      f^{\langle r\rangle}_k(x)
      =
      \sum_{t=0}^{k}
      \lambda_1^{\langle r\rangle}(k,t)\,x^{\,r+mt}.
  \end{equation}
 \item (Classwise orthogonality and inversion) The following are equivalent:
\begin{enumerate}[label=(\roman*), leftmargin=*, nosep]
\item for $0\le r<m$ and $1\le j\le k$,
\[
\sum_{t=0}^{j} \lambda_3^{\langle r\rangle}(k,t)\,
\lambda_1^{\langle r\rangle}(k-t,\,k-j)=0;
\]
\item for $0\le r<m$ and $k\ge0$, the inversion identity
\[
x^{mk+r}=\sum_{t=0}^{k}\lambda_3^{\langle r\rangle}(k,t)\,
f^{\langle r\rangle}_{\,k-t}(x).
\]
\end{enumerate}
  Moreover,
\[
    \{\lambda_3(n,t)\}_{n\ge0,\,t\in\mathbb Z}
    =
    \bigcup_{r=0}^{m-1}
    \{\lambda_3^{\langle r\rangle}(k,t)\}_{k\ge0,\,t\in\mathbb Z}.
\]
\end{enumerate}
\end{theorem}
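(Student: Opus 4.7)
The plan is to derive all three parts as direct consequences of \cref{thm:inversao-direta-base}, specialized to $n = mk+r$ for $0 \le r < m$, with careful bookkeeping of two different reindexations: an index flip $t = k-b$ for $\lambda_1^{\langle r\rangle}$ (so that the exponent $r+mt$ grows with $t$), and no flip for $\lambda_3^{\langle r\rangle}$. No new identity is needed; the proof is essentially a relabeling exercise, organized in three steps.

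For part (a), I would start from the defining expansion $f_n(x) = \sum_{b=0}^{w_m(n)}\lambda_1(n,b)\,x^{\,n-mb}$, specialize to $n = mk+r$, and use $w_m(mk+r) = k$. This gives $f^{\langle r\rangle}_k(x) = \sum_{b=0}^{k}\lambda_1(mk+r,b)\,x^{\,m(k-b)+r}$, and the substitution $t = k-b$ converts the exponent to $r+mt$ and the coefficient to $\lambda_1(mk+r,\,k-t) = \lambda_1^{\langle r\rangle}(k,t)$, yielding \eqref{eq:classe-direta}.

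For part (b), both (i) and (ii) are obtained by restricting the corresponding global statements in \cref{thm:inversao-direta-base}. The inversion identity (ii) follows by setting $n = mk+r$ in $x^n = \sum_{b=0}^{w_m(n)}\lambda_3(n,b)\,f_{n-mb}(x)$ and relabeling $t = b$, since $\lambda_3^{\langle r\rangle}(k,t) = \lambda_3(mk+r,t)$ requires no flip and $f_{m(k-b)+r} = f^{\langle r\rangle}_{k-b}$. For the orthogonality (i), I would rewrite the global condition with outer index $j$ in place of $k$, valid for $1 \le j \le k$ when $n = mk+r$; the inner factor then becomes $\lambda_1(m(k-b)+r,\,j-b)$, and the definition of $\lambda_1^{\langle r\rangle}$ identifies this with $\lambda_1^{\langle r\rangle}\bigl(k-b,\,(k-b)-(j-b)\bigr) = \lambda_1^{\langle r\rangle}(k-b,\,k-j)$. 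After $b \to t$, the claimed identity emerges, and the classwise equivalence (i) $\Leftrightarrow$ (ii) is inherited from \cref{thm:inversao-direta-base}.

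The final union formula records that the Euclidean map $(r,k) \mapsto mk+r$ is a bijection $\{0,\ldots,m-1\}\times\mathbb N \to \mathbb N$, so the families $\{\lambda_3^{\langle r\rangle}(k,t)\}_r$ partition the original $\{\lambda_3(n,t)\}_n$. The only place where the proof can slip is the asymmetric indexing: because $\lambda_1^{\langle r\rangle}$ is flipped but $\lambda_3^{\langle r\rangle}$ is not, the second argument of the inner factor in (i) comes out as $k-j$ rather than $j$; this is the one detail that must be checked carefully before trusting the relabeling, and it is essentially the entire substance of the proof.
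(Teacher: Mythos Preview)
Your proposal is correct and follows essentially the same route as the paper's own proof: specialize the global direct-kernel expansion, the orthogonality relation \eqref{eq:ortogonalidade-discreta}, and the inversion \eqref{eq:inversao-geral} to $n=mk+r$, use $w_m(mk+r)=k$, and track the asymmetric reindexation ($t=k-b$ for $\lambda_1^{\langle r\rangle}$, no flip for $\lambda_3^{\langle r\rangle}$) to obtain the second argument $k-j$ in (i). The paper likewise treats the equivalence (i)$\Leftrightarrow$(ii) as inherited from \cref{thm:inversao-direta-base} and closes with the Euclidean bijection $(r,k)\mapsto mk+r$ for the union formula.
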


\begin{proof}
Fix $r\in\{0,\dots,m-1\}$ and $k\ge0$. Since $w_m(mk+r)=k$, the definition of $\{f_n(x)\}_{n\ge0}$ gives
\[
f_{mk+r}(x)
=
\sum_{b=0}^{k}
\lambda_1(mk+r,b)\,x^{\,mk+r-mb}.
\]
Writing $t=k-b$, one has $0\le t\le k$ and $mk+r-mb=r+mt$, so that
\[
f^{\langle r\rangle}_k(x)
=f_{mk+r}(x)=
\sum_{t=0}^{k}
\lambda_1(mk+r,\,k-t)\,x^{\,r+mt}
=\sum_{t=0}^{k}
\lambda_1^{\langle r\rangle}(k,\,t)\,x^{\,r+mt},
\]
which is precisely \eqref{eq:classe-direta} by the definition of
$\lambda_1^{\langle r\rangle}$. This proves item~(a).

For item~(b), take $n=mk+r$ in \eqref{eq:ortogonalidade-discreta}. Since $w_m(n)=k$, one obtains, for $1\le j\le k$,
\begin{align*}
0
&=\sum_{t=0}^{j}\lambda_3(mk+r,t)\,\lambda_1\bigl(mk+r-mt,\,j-t\bigr)\\
&=\sum_{t=0}^{j}
\lambda_3(mk+r,t)\,\lambda_1\bigl(m(k-t)+r,\,j-t\bigr)\\
&=\sum_{t=0}^{j} \lambda_3^{\langle r\rangle}(k,t)\,
\lambda_1^{\langle r\rangle}(k-t,\,k-j),
\end{align*}
where, in the last equality, the definitions of
$\lambda_3^{\langle r\rangle}$ and $\lambda_1^{\langle r\rangle}$ were used. This establishes subitem~\textup{(i)}.

Taking again $n=mk+r$ in \eqref{eq:inversao-geral}, one obtains
\begin{align*}
x^{mk+r}
&=\sum_{t=0}^{k}\lambda_3(mk+r,t)\,f_{mk+r-mt}(x)\\
&=\sum_{t=0}^{k}\lambda_3(mk+r,t)\,f_{m(k-t)+r}(x)\\    
&=\sum_{t=0}^{k}\lambda_3^{\langle r\rangle}(k,t)\,
f^{\langle r\rangle}_{\,k-t}(x),
\end{align*}
which is precisely the inversion identity in subitem~\textup{(ii)}.

Since, under the hypotheses of the theorem, \eqref{eq:ortogonalidade-discreta} and
\eqref{eq:inversao-geral} are equivalent, and the identities in
subitems~\textup{(i)} and~\textup{(ii)} are obtained from these global equations
solely by bijective reindexations of the indices, it follows that
\textup{(i)} and \textup{(ii)} are also equivalent, as stated in item~(b).

Finally, each $n\ge0$ admits a unique decomposition $n=mk+r$ with
$0\le r<m$, and by definition
\[
\lambda_3^{\langle r\rangle}(k,t)
=
\lambda_3(mk+r,t).
\]
Thus, every pair $(n,t)$ can be written uniquely as
$(n,t)=(mk+r,t)$ for some $0\le r<m$ and $k\ge0$, and one obtains
\[
    \{\lambda_3(n,t)\}_{n\ge0,\,t\in\mathbb Z}
    =
    \bigcup_{r=0}^{m-1}
    \{\lambda_3^{\langle r\rangle}(k,t)\}_{k\ge0,\,t\in\mathbb Z},
\]
which completes the proof.
\end{proof}

\begin{corollary}[Composition of inversions]\label{thm:composicao}
Let $m\in\mathbb{N}_+$. Consider the families
\begin{equation}\label{eq:familias-fg}
    f_n(x)=\sum_{b=0}^{w_m(n)} \lambda_1(n,b)\,x^{\,n-mb},
    \qquad
    g_n(x)=\sum_{c=0}^{w_m(n)} \mu_1(n,c)\,x^{\,n-mc}.
\end{equation}
Assume that triangular inverse kernels $\lambda_3$ and $\mu_3$ associated with $\{f_n\}$ and $\{g_n\}$ exist, that is, for all $n\in\mathbb{N}$,
\begin{equation}\label{eq:inversoes-fg}
\begin{aligned}
    x^n &= \sum_{b=0}^{w_m(n)} \lambda_{3}(n,b)\, f_{n-mb}(x),\\
    x^n &= \sum_{c=0}^{w_m(n)} \mu_{3}(n,c)\, g_{n-mc}(x).
\end{aligned}
\end{equation}
Then, for all $n\in\mathbb{N}$,
\begin{equation}\label{eq:relacao-fg1}
    f_n(x)
    = \sum_{k=0}^{w_m(n)} g_{n-mk}(x)\,z(n,k),
\end{equation}
with
\begin{equation}\label{eq:relacao-fg2}
    z(n,k)
    = \sum_{b=0}^{k}\lambda_1(n,b)\,\mu_{3}(n-mb,k-b).
\end{equation}
\end{corollary}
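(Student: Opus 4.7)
The plan is to chain the two expansions together and recognize the resulting double sum as a single sum of $g_{n-mk}$ against the claimed coefficients $z(n,k)$. First I would substitute into the defining expansion of $f_n(x)$ the inversion formula for $g$ applied at level $n-mb$, namely
\[
x^{n-mb} \;=\; \sum_{c=0}^{w_m(n-mb)} \mu_3(n-mb,c)\,g_{n-mb-mc}(x),
\]
which is valid by the second identity in \eqref{eq:inversoes-fg}. Using the elementary identity $w_m(n-mb)=w_m(n)-b$ (already exploited in the proof of \cref{thm:inversao-direta-base}), this produces
\[
f_n(x)
=\sum_{b=0}^{w_m(n)}\sum_{c=0}^{w_m(n)-b}
\lambda_1(n,b)\,\mu_3(n-mb,c)\,g_{n-mb-mc}(x).
\]

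Next I would apply the convolution-type reindexation of \cref{prop:conv-reindex} with $u=w_m(n)$ and
\[
a_{b,k}\;=\;\lambda_1(n,b)\,\mu_3(n-mb,\,k-b)\,g_{n-mk}(x),
\]
so that $a_{b,b+c}$ recovers the triple $\lambda_1(n,b)\,\mu_3(n-mb,c)\,g_{n-mb-mc}(x)$ of the previous display via the change of summation index $k=b+c$. This rewrites the double sum as
\[
f_n(x)
=\sum_{k=0}^{w_m(n)} g_{n-mk}(x)
\sum_{b=0}^{k}\lambda_1(n,b)\,\mu_3(n-mb,\,k-b),
\]
and the inner sum is, by definition, $z(n,k)$, yielding \eqref{eq:relacao-fg1}–\eqref{eq:relacao-fg2}.

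There is no real obstacle: the argument is purely a bookkeeping manipulation, a direct parallel of the reindexation step performed inside the proof of \cref{thm:inversao-direta-base}, with $\lambda_3,\lambda_1$ replaced by $\lambda_1,\mu_3$. The only mild point worth checking is that the truncation $c\le w_m(n)-b$ is the correct one, which follows because $\mu_3\in\Delta_m$ forces $\mu_3(n-mb,c)=0$ outside $0\le c\le w_m(n-mb)=w_m(n)-b$; this guarantees that extending the inner sum to $0\le c\le w_m(n)-b$ does not alter the value and justifies the symmetric form required by \cref{prop:conv-reindex}. No appeal to uniqueness or to the orthogonality identity \eqref{eq:ortogonalidade-discreta} is needed; only the inversion identity for $g$ and the reindexation lemma are used.
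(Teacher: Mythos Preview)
Your proposal is correct and follows essentially the same route as the paper's proof: start from the expansion of $f_n$, substitute the $g$-inversion for each $x^{n-mb}$, use $w_m(n-mb)=w_m(n)-b$, and then apply \cref{prop:conv-reindex} with the reindexation $k=b+c$ to collect the inner sum as $z(n,k)$. The only cosmetic difference is that you spell out explicitly the choice of $a_{b,k}$ and the role of $\mu_3\in\Delta_m$ in justifying the truncation, which the paper leaves implicit.
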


\begin{proof}
From the definition of $f_n$,
\[
f_n(x)=\sum_{b=0}^{w_m(n)} \lambda_1(n,b)\,x^{\,n-mb}.
\]
For each $b$, apply the inversion identity for $g$ in \eqref{eq:inversoes-fg} to the shifted index $n-mb$:
\[
x^{\,n-mb}
=\sum_{c=0}^{w_m(n-mb)}
\mu_3(n-mb,c)\,
g_{n-m(b+c)}(x).
\]
Substituting into $f_n(x)$ and using $w_m(n-mb)=w_m(n)-b$ yields
\[
f_n(x)
=\sum_{b=0}^{w_m(n)} \lambda_1(n,b)
\sum_{c=0}^{w_m(n)-b}
\mu_3(n-mb,c)\,
g_{n-m(b+c)}(x).
\]
Reindexing by $k=b+c$ and applying Proposition~\ref{prop:conv-reindex}, one obtains
\[
f_n(x)
=\sum_{k=0}^{w_m(n)} g_{n-mk}(x)\,
\sum_{b=0}^{k}
\lambda_1(n,b)\,\mu_3(n-mb,k-b),
\]
that is, \eqref{eq:relacao-fg1}–\eqref{eq:relacao-fg2}.
\end{proof}

\begin{theorem}[Composition of inversions with distinct orders]\label{thm:composicao-m1-m2}
Fix $m_1,m_2\in\mathbb N_+$ and, for $n\ge0$, write $w_{m_i}(n)=\lfloor n/m_i\rfloor$ $(i=1,2)$.
Consider
\[
f_n(x)=\sum_{b=0}^{w_{m_1}(n)} \lambda_1(n,b)\,x^{\,n-m_1b},
\qquad
g_n(x)=\sum_{c=0}^{w_{m_2}(n)} \mu_1(n,c)\,x^{\,n-m_2c},
\]
with direct kernels $\lambda_1\in\Delta_{m_1}$ and $\mu_1\in\Delta_{m_2}$, respectively.
Assume that triangular inverse kernels $\lambda_3$ and $\mu_3$ exist such that, for all $n\ge0$,
\begin{equation}\label{eq:inv-m1m2}
x^n=\sum_{b=0}^{w_{m_1}(n)} \lambda_3(n,b)\,f_{n-m_1b}(x),
\qquad
x^n=\sum_{c=0}^{w_{m_2}(n)} \mu_3(n,c)\,g_{n-m_2c}(x).
\end{equation}
Then, for all $n\in\mathbb N$, the following change-of-basis expansion holds:
\begin{equation}\label{eq:fg-m1m2}
f_n(x)=\sum_{r=0}^{n} g_r(x)\,Z(n;r),
\end{equation}
where
\begin{equation}\label{eq:Z-m1m2}
Z(n;r)\;=\;\sum_{\substack{
0\le b\le w_{m_1}(n)\\[1pt]
r\le n-m_1b\\[1pt]
n-m_1b-r\equiv 0\ (\mathrm{mod}\ m_2)
}}
\lambda_1(n,b)\;
\mu_3\!\Bigl(n-m_1b,\ \frac{n-m_1b-r}{m_2}\Bigr).
\end{equation}
In the particular case $m_1=m_2=m$, writing $r=n-mk$ with $0\le k\le w_m(n)$, one obtains
\[
Z(n;r)=\sum_{b=0}^{k} \lambda_1(n,b)\,\mu_3(n-mb,\,k-b),
\]
in agreement with \eqref{eq:relacao-fg2}.
\end{theorem}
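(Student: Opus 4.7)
The plan is to mirror the proof of Corollary~\ref{thm:composicao}: expand $f_n$ on the monomial basis via $\lambda_1$, substitute each monomial $x^{\,n-m_1 b}$ using the $g$-inversion of order $m_2$ at the shifted level $n-m_1 b$, and then collect terms by the subscript of $g_r(x)$. Concretely, the two substitutions produce
\[
f_n(x)=\sum_{b=0}^{w_{m_1}(n)}\sum_{c=0}^{w_{m_2}(n-m_1 b)}
\lambda_1(n,b)\,\mu_3(n-m_1 b,c)\,g_{n-m_1 b-m_2 c}(x),
\]
after which the task is to group pairs $(b,c)$ by the common value of $n-m_1 b-m_2 c$.

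Unlike the equal-step situation of Corollary~\ref{thm:composicao}, Proposition~\ref{prop:conv-reindex} does not apply directly here, since the two summation indices advance in arithmetic progressions of different step sizes. Instead, I would fix $r\in\{0,1,\dots,n\}$ and parametrize the fiber over $r$: for each $b$, the companion index $c$ is uniquely determined by $c=(n-m_1 b-r)/m_2$, which is a valid nonnegative integer precisely when $0\le b\le w_{m_1}(n)$, $n-m_1 b\ge r$, and $m_2\mid(n-m_1 b-r)$. Under these three conditions, the upper bound $c\le w_{m_2}(n-m_1 b)$ is automatic from $c\le(n-m_1 b)/m_2$. Swapping the order of summation then yields \eqref{eq:fg-m1m2} with $Z(n;r)$ exactly as in \eqref{eq:Z-m1m2}.

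For the specialization $m_1=m_2=m$, I would write $r=n-mk$ with $0\le k\le w_m(n)$. Then $n-m_1 b-r=m(k-b)$, so the divisibility condition is automatic, $c=k-b$, and the range constraints collapse to $0\le b\le k$, recovering \eqref{eq:relacao-fg2}.

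The only genuinely delicate step is the bookkeeping for the fiber over $r$: one must check that admissible pairs $(b,c)$ in the double sum are in bijection with the pairs $(b,r)$ satisfying the three conditions defining the index set of $Z(n;r)$, so that no spurious terms are introduced and none are missed. Everything else reduces to substituting the two hypothesized inversions in turn and to a Fubini-type exchange of finite sums.
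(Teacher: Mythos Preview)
Your proposal is correct and follows essentially the same route as the paper: expand $f_n$ via $\lambda_1$, substitute each $x^{\,n-m_1b}$ using the $g$-inversion at level $n-m_1b$, set $r=n-m_1b-m_2c$, and regroup by $r$ after checking that the fiber over $r$ is parametrized exactly by $b$ subject to the three conditions in \eqref{eq:Z-m1m2}. The paper adds only the incidental remark that $Z(n;r)=0$ unless $r\equiv n\pmod{\gcd(m_1,m_2)}$, which is not needed for the statement as written.
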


\begin{proof}
From the definition of $f_n$,
\[
f_n(x)=\sum_{b=0}^{w_{m_1}(n)} \lambda_1(n,b)\,x^{\,n-m_1b}.
\]
For each $b$, apply the inversion identity for $g$ in \eqref{eq:inv-m1m2} to the shifted index $n-m_1b$:
\[
x^{\,n-m_1b}
=\sum_{c=0}^{w_{m_2}(n-m_1b)} \mu_3(n-m_1b,c)\,g_{n-m_1b-m_2c}(x).
\]
Substituting into $f_n(x)$ and interchanging the sums yields
\[
f_n(x)=\sum_{b=0}^{w_{m_1}(n)} \lambda_1(n,b)
\sum_{c=0}^{w_{m_2}(n-m_1b)} \mu_3(n-m_1b,c)\,g_{n-m_1b-m_2c}(x).
\]

Setting
\[
r=n-m_1b-m_2c,
\]
the pairs $(b,c)$ contributing to a given $r$ are exactly those such that
\[
0\le b\le w_{m_1}(n),\quad r\le n-m_1b,\quad n-m_1b-r=m_2c\ge0,
\]
that is, $n-m_1b-r$ is a multiple of $m_2$ and $c=(n-m_1b-r)/m_2\in\mathbb N$.
In particular, writing $g=\gcd(m_1,m_2)$, the Diophantine equation
\[
r=n-m_1b-m_2c
\]
admits an integer solution $(b,c)$ if and only if $r\equiv n\pmod g$; hence, for $r\not\equiv n\pmod g$ the inner sum in \eqref{eq:Z-m1m2} is empty and therefore $Z(n;r)=0$.

Regrouping by $r$ yields
\[
f_n(x)=\sum_{r=0}^{n} g_r(x)\,
\sum_{\substack{
0\le b\le w_{m_1}(n)\\
r\le n-m_1b\\
n-m_1b-r\equiv 0\ (\mathrm{mod}\ m_2)
}}
\lambda_1(n,b)\;
\mu_3\!\Bigl(n-m_1b,\ \frac{n-m_1b-r}{m_2}\Bigr),
\]
which is precisely \eqref{eq:fg-m1m2}–\eqref{eq:Z-m1m2}. In the case $m_1=m_2=m$, taking $r=n-mk$ with
$0\le k\le w_m(n)$ immediately yields the announced triangular expression.
\end{proof}

\begin{proposition}[Reindexation by residue classes]\label{prop:reindex-classes}
Let $m\in\mathbb{N}_+$, $n\in\mathbb{N}$, and let $\{a_{j,k}\}$ be a family. Then
\[
\sum_{k=0}^{n}\ \sum_{j=0}^{k} a_{j,k}\,\Iverson{k\equiv j \!\!\!\pmod m}
\;=\;
\sum_{k=0}^{n}\ \sum_{t=0}^{\lfloor k/m\rfloor} a_{\,k-mt,\,k}
\;=\;
\sum_{j=0}^{n}\ \sum_{t=0}^{\left\lfloor\frac{n-j}{m}\right\rfloor} a_{\,j,\,j+mt}.
\]
\end{proposition}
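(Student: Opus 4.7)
The plan is to show that all three sums range over the same underlying index set, namely
\[
I \;=\; \bigl\{(j,k)\in\mathbb N^2 \;:\; 0\le j\le k\le n,\ \ k-j\equiv 0\pmod m\bigr\},
\]
and that each of the three expressions is merely a different parametrization of $I$. Accordingly, I will introduce an auxiliary sum $S$ over $I$, written with Iverson brackets, and verify each equality by a substitution that matches $I$ bijectively with the corresponding pair of outer/inner index ranges. This is the same bookkeeping strategy used in the proof of Proposition~\ref{prop:conv-reindex}.

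First I will show the first equality. Starting from
\[
S \;=\; \sum_{j,k} a_{j,k}\,\Iverson{0\le j\le k\le n}\,\Iverson{k\equiv j \!\!\pmod m},
\]
I use the substitution $t=(k-j)/m$, valid precisely when $m\mid(k-j)$, so that $j=k-mt$. The conditions $0\le j\le k$ translate to $0\le t\le k/m$, and since $t\in\mathbb N$ this is $0\le t\le\lfloor k/m\rfloor$. Summing $k$ from $0$ to $n$ and $t$ in the indicated range then gives the middle expression $\sum_{k=0}^{n}\sum_{t=0}^{\lfloor k/m\rfloor}a_{k-mt,k}$.

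For the second equality, I reparametrize $I$ by $(j,t)$ instead, writing $k=j+mt$. The constraints $0\le j\le k\le n$ and $m\mid(k-j)$ become $j\ge0$, $t\ge0$, and $j+mt\le n$, the last of which is equivalent to $0\le t\le\lfloor(n-j)/m\rfloor$. Letting $j$ range over $0,\dots,n$ (since $t=0$ is always admissible and $j>n$ would force $j+mt>n$) yields the third expression $\sum_{j=0}^{n}\sum_{t=0}^{\lfloor(n-j)/m\rfloor}a_{j,j+mt}$.

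Since all three formulations describe the same sum $S$ over $I$ up to relabeling of the summation variables, the stated identity follows. The main (and only) subtlety will be to make the translation of the inequality $j\ge 0$ into $t\le\lfloor k/m\rfloor$ in the first step and, symmetrically, the translation of $k\le n$ into $t\le\lfloor(n-j)/m\rfloor$ in the second step; both rely on the fact that $t$ is a non-negative integer, so that the real-valued bound can be replaced by its floor without loss.
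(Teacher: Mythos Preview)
Your proposal is correct and follows essentially the same approach as the paper's own proof: both introduce the common index set via Iverson brackets and then perform the two substitutions $j=k-mt$ and $k=j+mt$ to obtain the second and third expressions, respectively. The paper's proof is simply a more compressed version of exactly the reindexing you describe.
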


\begin{proof}
\[
\begin{aligned}
S
&= \sum_{j,k} a_{j,k}\,\Iverson{0\le k\le n}\,\Iverson{0\le j\le k}\,\Iverson{k\equiv j \!\!\!\pmod m}
\\[2mm]
&= \sum_{k,t} a_{\,k-mt,\,k}\,\Iverson{0\le k\le n}\,\Iverson{0\le t\le \left\lfloor\frac{k}{m}\right\rfloor}
\\[2mm]
&= \sum_{j,t} a_{\,j,\,j+mt}\,\Iverson{0\le j\le n}\,\Iverson{0\le t\le \left\lfloor\frac{n-j}{m}\right\rfloor}.
\end{aligned}
\]
Since the three expressions define the same $S$, the equalities follow.
\end{proof}

\begin{theorem}[Inversion by residue classes]\label{thm:inversao-classes}
Fix $m\in\mathbb N_+$. Let $\mathcal F=\{f_{n}(x)\}_{n\ge0}$ be a polynomial family of order $m$ such that, for all $n\in\mathbb N$,
\[
x^{n}=\sum_{b=0}^{w_m(n)} \lambda_{3}(n,b)\,f_{n-mb}(x),
\]
where $\lambda_3\in\Delta_m$ is a triangular inverse kernel associated with $\mathcal F$.
For a polynomial $p(x)=\sum_{a=0}^{N} c_a\,x^a$ (with $N\in\mathbb N$), the classwise coordinates
\[
C(r)\;=\;\sum_{t=0}^{\left\lfloor\frac{N-r}{m}\right\rfloor} c_{\,r+mt}\,\lambda_{3}(r+mt,t)\qquad(0\le r\le N)
\]
hold, and the expansion of $p$ in the basis $\mathcal F$ is
\begin{equation}\label{eq:escada-forma-final}
p(x)
=\sum_{r=0}^{N} C(r)\,f_{r}(x)
=\sum_{r=0}^{N} f_{r}(x)\,
\sum_{t=0}^{\left\lfloor\frac{N-r}{m}\right\rfloor}
c_{\,r+mt}\,\lambda_{3}(r+mt,t).
\end{equation}
\end{theorem}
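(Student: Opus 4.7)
The plan is to start from the monomial expansion $p(x)=\sum_{a=0}^{N}c_a\,x^a$ and substitute the inversion identity for each monomial $x^a$, then reorganize the resulting double sum via \Cref{prop:reindex-classes} to collect the coefficient of $f_r(x)$. First I would apply the hypothesis $x^{a}=\sum_{b=0}^{w_m(a)}\lambda_3(a,b)\,f_{a-mb}(x)$ termwise, obtaining
\[
p(x)=\sum_{a=0}^{N}c_{a}\sum_{b=0}^{w_m(a)}\lambda_{3}(a,b)\,f_{a-mb}(x).
\]
The natural change of variable is $r=a-mb$, which forces $r\equiv a\pmod m$, $0\le r\le a$, and $b=(a-r)/m$; rewriting the inner sum in terms of $r$ converts the double sum into one indexed by pairs $(a,r)$ with $0\le r\le a\le N$ and $a\equiv r\pmod m$.

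Next I would invoke \Cref{prop:reindex-classes} with $n=N$, $k=a$, $j=r$, and
\[
a_{j,k} \;=\; c_{k}\,\lambda_{3}\!\bigl(k,(k-j)/m\bigr)\,f_{j}(x)
\]
(the value being used only when $k\equiv j\pmod m$, consistently with the Iverson bracket in the statement of the proposition). The left-hand side of the proposition reproduces the expression obtained in the previous step, and the rightmost form of the proposition is a sum over $j$ and $t$ with $a=j+mt$ and $0\le t\le\lfloor(N-j)/m\rfloor$, which after renaming $j\mapsto r$ yields exactly
\[
p(x)=\sum_{r=0}^{N} f_{r}(x)\sum_{t=0}^{\lfloor(N-r)/m\rfloor} c_{r+mt}\,\lambda_{3}(r+mt,\,t).
\]
Recognizing the inner sum as $C(r)$ then delivers \eqref{eq:escada-forma-final}.

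The proof is essentially a bookkeeping exercise; the only delicate point is verifying that the range constraints match. Concretely, the condition $b\le w_m(a)$ is automatic once $r\ge 0$ is imposed, since $b=(a-r)/m\le a/m$; and the upper bound $t\le\lfloor(N-r)/m\rfloor$ in $C(r)$ corresponds exactly to the requirement $a=r+mt\le N$ from the outer sum. Once these equivalences are written out, the two reindexations compose cleanly and no further work is needed. I expect the main (and only) obstacle to be presenting the substitution $r=a-mb$ and the invocation of \Cref{prop:reindex-classes} in a sufficiently transparent way so that the reader sees why the sum over $a$ with the congruence condition becomes a sum over $t$ with no residue condition.
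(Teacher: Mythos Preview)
Your proposal is correct and follows essentially the same route as the paper's proof: substitute the inversion identity for each $x^a$, then apply \Cref{prop:reindex-classes} with the reindexation $r=a-mb$ (equivalently $a=r+mt$) to regroup by $f_r(x)$. Your explicit identification of the middle form of \Cref{prop:reindex-classes} with the post-substitution double sum, and of the rightmost form with the target expression, is exactly the mechanism the paper uses, only spelled out in slightly more detail.
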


\begin{proof}
Write $p(x)=\sum_{a=0}^{N} c_a x^a$ and, for each $a$, substitute the inversion
\[
x^a=\sum_{b=0}^{w_m(a)}\lambda_3(a,b)\,f_{a-mb}(x).
\]
One obtains
\[
p(x)=\sum_{a=0}^{N} c_a \sum_{b=0}^{w_m(a)} \lambda_3(a,b)\,f_{a-mb}(x).
\]
Reindex by the residue class $r=a-mb$ (that is, $a=r+mb$) and use
Proposition~\ref{prop:reindex-classes} to reorganize the sums:
\[
p(x)=\sum_{r=0}^{N} f_{r}(x)\,
\sum_{t=0}^{\left\lfloor\frac{N-r}{m}\right\rfloor} c_{\,r+mt}\,\lambda_{3}(r+mt,t),
\]
which is exactly \eqref{eq:escada-forma-final}.
\end{proof}

\begin{example}[Preprocessing for Clenshaw-type schemes]
Under the hypotheses of Theorem~\ref{thm:inversao-classes}, consider a
polynomial $p(x)=\sum_{a=0}^{N} c_a x^a$. By the inversion-by-classes formula, one may rewrite
\[
p(x)=\sum_{r=0}^{N} C(r)\,f_{r}(x),
\qquad
C(r)=\sum_{t=0}^{\left\lfloor\frac{N-r}{m}\right\rfloor}
c_{\,r+mt}\,\lambda_{3}(r+mt,t).
\]
In other words, the inverse kernel $\lambda_3$ provides a
preprocessing of the coefficients $\{c_a\}$ into new coefficients
$\{C(r)\}$ adapted to the basis $\{f_r\}$. If, in addition, the family
$\{f_{r}(x)\}_{r\ge0}$ satisfies a three-term recurrence in $r$,
then the sum $\sum_{r=0}^{N} C(r)\,f_{r}(x)$ can be evaluated by a
Clenshaw-type scheme adapted to the basis $\{f_r\}$, using the coefficients
$C(r)$.
\end{example}

\begin{theorem}[Determinantal formula for the triangular inverse kernel]\label{thm4}
Let $m\in\mathbb{N}_+$ and $n\in\mathbb{N}$, and write $w_m(n)=\lfloor n/m\rfloor$.
Consider a direct kernel
\[
\lambda_1\in\Delta_m
\]
(with entries in $R$). Suppose that, for this fixed $n$, there exist coefficients
$\lambda_3(n,k)\in R$ satisfying:
\begin{enumerate}[label={\normalfont(\alph*)}]
    \item the discrete orthogonality condition
    \[
      \sum_{b=0}^{t}\lambda_{3}(n,b)\,\lambda_{1}(n-mb,t-b)=0
      \quad\text{for all } 1\le t\le w_m(n);
    \]
    \item the normalization
    \[
      \lambda_1(n,0)\,\lambda_3(n,0)=1;
    \]
    \item $\lambda_{1}(n-im,0)\in R^{\times}$ for all $1\le i\le w_m(n)$, that is,
    all diagonal coefficients $\lambda_1(n-im,0)$ are units in $R$.
\end{enumerate}
Then, for each $1\le k\le w_m(n)$, the coefficients $\lambda_3(n,k)$ are given by
\[
\lambda_{3}(n,k)=(-1)^k\,
\frac{\det A_k}{\displaystyle\prod_{i=0}^{k}\lambda_{1}(n-im,0)},
\]
where $A_k$ is the $k\times k$ lower Hessenberg matrix
\[
A_k=\begin{bmatrix}
\lambda_{1}(n,1) & \lambda_{1}(n-m,0) & 0 & \cdots & 0\\
\lambda_{1}(n,2) & \lambda_{1}(n-m,1) & \lambda_{1}(n-2m,0) & \cdots & 0\\
\lambda_{1}(n,3) & \lambda_{1}(n-m,2) & \lambda_{1}(n-2m,1) & \ddots & \vdots\\
\vdots & \vdots & \vdots & \ddots & \lambda_{1}(n-(k-1)m,0)\\
\lambda_{1}(n,k) & \lambda_{1}(n-m,k-1) & \lambda_{1}(n-2m,k-2) & \cdots & \lambda_{1}(n-(k-1)m,1)
\end{bmatrix}.
\]
\end{theorem}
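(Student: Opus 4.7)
The plan is to reduce the determinantal formula to a direct application of Cramer's rule. For fixed $n$, once the normalization (b) is imposed as $\lambda_3(n, 0) = 1/\lambda_1(n, 0)$, the remaining orthogonality relations in (a) form a square linear system for the unknowns $\lambda_3(n, 1), \dots, \lambda_3(n, k)$. Isolating the $b = 0$ term yields
\[
\sum_{b=1}^{t} \lambda_3(n, b)\, \lambda_1(n - mb, t - b) \;=\; -\frac{\lambda_1(n, t)}{\lambda_1(n, 0)}, \qquad 1 \le t \le k,
\]
and because $\lambda_1 \in \Delta_m$ forces $\lambda_1(n - mb, t - b) = 0$ whenever $b > t$, the $k \times k$ coefficient matrix $L := [\lambda_1(n - mb, t - b)]_{1 \le t, b \le k}$ is lower triangular with diagonal entries $\lambda_1(n - mt, 0)$, each a unit by (c). Hence $L$ is invertible and $\det L = \prod_{t=1}^{k} \lambda_1(n - mt, 0)$.

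Next, I would invoke Cramer's rule to write $\lambda_3(n, k) = \det L^{(k)} / \det L$, where $L^{(k)}$ is obtained from $L$ by replacing its last column with the right-hand side vector. The heart of the argument is to transform $L^{(k)}$ into the stated $A_k$ in two controlled steps: first factor the scalar $-1/\lambda_1(n, 0)$ out of the replaced column (so that this column becomes $(\lambda_1(n, t))_{t=1}^{k}$), then cyclically move that column from position $k$ back to position $1$ via $k - 1$ adjacent transpositions, which contributes a sign $(-1)^{k-1}$. After these manipulations, column $b$ of the original $L$ sits in position $b + 1$, and an entry-by-entry check shows that column $b$ of $L$, having $b - 1$ leading zeros followed by $\lambda_1(n - mb, 0), \dots, \lambda_1(n - mb, k - b)$, coincides exactly with column $b + 1$ of the announced Hessenberg matrix $A_k$. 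Combining the two sign contributions yields $\det L^{(k)} = (-1)^{k}\, \det A_k / \lambda_1(n, 0)$, and substituting into the Cramer quotient produces the claimed closed form.

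The delicate point is this column-by-column identification of the transformed $L^{(k)}$ with $A_k$: I would need to verify that the zero pattern above the subdiagonal of $L$ aligns with the zero pattern above the superdiagonal of $A_k$, and to reconcile cleanly the two independent sources of sign (the minus on the right-hand side, and the $(-1)^{k-1}$ from the transpositions). A strong induction on $k$ using the classical cofactor recurrence for lower Hessenberg determinants along the last row would provide an alternative route, but I expect the Cramer-rule argument to be the most transparent, because it exhibits directly how each of the three hypotheses (a), (b), and (c) enters the final formula.
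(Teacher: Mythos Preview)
Your proposal is correct and follows essentially the same approach as the paper's proof: both set up the orthogonality relations as a lower triangular linear system (your $L$ is exactly the paper's matrix $T_k$), apply Cramer's rule, and then identify the Cramer numerator with $\det A_k$ via $k-1$ column transpositions and extraction of the sign/scalar from the right-hand side column. The only cosmetic difference is that the paper first introduces an auxiliary normalized sequence $P_{(n,k)}=\lambda_1(n,0)\lambda_3(n,k)$ before writing down the system, whereas you carry the factor $1/\lambda_1(n,0)$ through directly; your route is slightly more streamlined but otherwise identical in substance.
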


\begin{proof}
From the discrete orthogonality condition for $t=k$,
\[
\sum_{b=0}^{k}\lambda_{3}(n,b)\,\lambda_{1}(n-mb,k-b)=0.
\]
Isolating the term $b=k$ yields
\[
\lambda_3(n,k)\,\lambda_1(n-mk,0)
=-\sum_{b=0}^{k-1}\lambda_3(n,b)\,\lambda_1(n-mb,k-b),
\]
that is,
\[
\lambda_3(n,k)=-\frac{\displaystyle\sum_{b=0}^{k-1}\lambda_3(n,b)\,\lambda_1(n-mb,k-b)}{\lambda_1(n-mk,0)},
\qquad 1\le k\le w_m(n).
\]

Define $P_{(n,0)}=1$ and, for $k\ge 1$,
\[
P_{(n,k)}=-\frac{\displaystyle\sum_{b=0}^{k-1}P_{(n,b)}\,\lambda_1(n-mb,k-b)}{\lambda_1(n-mk,0)}.
\]
It is shown by induction on $k$ that
\[
\lambda_3(n,k)=\lambda_3(n,0)\,P_{(n,k)}
\quad\text{for } 0\le k\le w_m(n).
\]
The base case $k=0$ is immediate. For the inductive step, assume the identity holds for
$b<k$; substituting into the recurrence for $\lambda_3$ gives
\[
\lambda_3(n,k)
= -\frac{\displaystyle\lambda_3(n,0)\sum_{b=0}^{k-1}P_{(n,b)}\,\lambda_1(n-mb,k-b)}{\lambda_1(n-mk,0)}
= \lambda_3(n,0)\,P_{(n,k)}.
\]

The recurrence for $P_{(n,k)}$ is now organized into a lower triangular linear system. For
$1\le t\le k$, multiplying the definition of $P_{(n,t)}$ by $\lambda_1(n-mt,0)$ and isolating the term with $b=0$ (using $P_{(n,0)}=1$) yields
\[
\lambda_{1}(n-mt,0)\,P_{(n,t)}
+\sum_{i=1}^{t-1}\lambda_{1}\bigl(n-im,t-i\bigr)\,P_{(n,i)}
=-\,\lambda_{1}(n,t).
\]
In matrix notation, this is
\[
T_k\,\mathbf{P}=\mathbf{c},
\]
where
\begin{align*}
    T_k&=
\begin{bmatrix}
\lambda_{1}(n-m,0) & 0 & \cdots & 0\\
\lambda_{1}(n-m,1) & \lambda_{1}(n-2m,0) & \cdots & 0\\
\vdots & \vdots & \ddots & \vdots\\
\lambda_{1}(n-m,k-1) & \lambda_{1}(n-2m,k-2) & \cdots & \lambda_{1}(n-km,0)
\end{bmatrix},\\
\mathbf{P}&=\begin{bmatrix}P_{(n,1)}\\ \vdots\\ P_{(n,k)}\end{bmatrix},\\
\mathbf{c}&=\begin{bmatrix}-\lambda_{1}(n,1)\\ \vdots\\ -\lambda_{1}(n,k)\end{bmatrix}.
\end{align*}

The matrix $T_k$ is lower triangular, with
\[
\det T_k=\prod_{i=1}^{k}\lambda_{1}(n-im,0),
\]
which is a unit in $R$ by hypothesis {\normalfont(c)}. Thus, the system has a unique solution and Cramer's rule applies:
\[
P_{(n,k)}=\frac{\det T_k^{(k)}}{\det T_k},
\]
where $T_k^{(k)}$ is the matrix obtained from $T_k$ by replacing the $k$-th column by $\mathbf{c}$.

To place the vector of constants in the first column, move this column of $T_k^{(k)}$ to the first position: this performs $k-1$ adjacent column swaps, introducing the factor $(-1)^{k-1}$. The resulting matrix has first column
\[
\begin{bmatrix}-\lambda_{1}(n,1)\\ \vdots\\ -\lambda_{1}(n,k)\end{bmatrix}
\]
and the remaining columns exactly as in $A_k$. Factoring $-1$ from the first column yields an additional factor $(-1)$ in the determinant and produces $A_k$. In summary,
\[
\det T_k^{(k)} = (-1)^{k-1}\cdot(-1)\,\det A_k = (-1)^k\,\det A_k,
\]
where $A_k$ is precisely the lower Hessenberg matrix displayed in the statement.
It follows that
\[
P_{(n,k)}=(-1)^k\,\frac{\det A_k}{\displaystyle\prod_{i=1}^{k}\lambda_{1}(n-im,0)}.
\]

Finally,
\[
\lambda_3(n,k)
=\lambda_3(n,0)\,P_{(n,k)}
=(-1)^k\,\lambda_3(n,0)\,
\frac{\det A_k}{\displaystyle\prod_{i=1}^{k}\lambda_{1}(n-im,0)}.
\]
By the normalization $\lambda_1(n,0)\,\lambda_3(n,0)=1$, one has
\(
\lambda_3(n,0)=\lambda_1(n,0)^{-1}
\),
so that
\[
\lambda_3(n,k)
=(-1)^k\,
\frac{\det A_k}{\displaystyle\lambda_1(n,0)\prod_{i=1}^{k}\lambda_{1}(n-im,0)}
= (-1)^k\,
\frac{\det A_k}{\displaystyle\prod_{i=0}^{k}\lambda_{1}(n-im,0)}.
\]
\end{proof}

\section{Lambda-recursive sequences and the algebraic expansion matrix}\label{sec3}

\begin{definition}[Algebraic expansion matrix]
Fix $m\in\mathbb N_+$.
For $n\in\mathbb N$ and $k\in\mathbb N$ with $1\le k\le w_m(n)$, define the
\emph{algebraic expansion matrix} $\mathcal M_{(n,k)}=(a_{ij})_{1\le i,j\le k}$ by
\[
a_{ij}=
\begin{cases}
\lambda_1\!\bigl(n-(j-1)m,\; i-j+1\bigr), & \text{if } j\le i+1,\\[2pt]
0, & \text{if } j> i+1.
\end{cases}
\]
The family $\lambda_1$ is called the \emph{base sequence} of $\mathcal M_{(n,k)}$.
\end{definition}

By Theorem~\ref{thm4}, under its hypotheses, the coefficient $\lambda_3(n,k)$ satisfies
\[
\lambda_3(n,k)
=\;(-1)^k\,\frac{\det \mathcal M_{(n,k)}}{\displaystyle\prod_{i=0}^{k}\lambda_1(n-im,0)}.
\]

\begin{definition}[Lambda-recursive sequence of order $m$]\label{def:lamb-recur}
Let $m\in\mathbb N_+$ and let $\lambda_1:\mathbb N\times\mathbb Z\rightarrow R$
be a family with triangular support of order $m$, that is, $\lambda_1\in\Delta_m$.
The family $\lambda_1$ is said to be \emph{lambda-recursive (of order $m$)}
if there exist sequences $p_n\in R$ (principal factor) and $h_{(n,k)}\in R$ (auxiliary factor),
and initial data $c_n\in R$ for $0\le n\le m-1$, such that, for all admissible indices
$(n,k)$ with $0\le k\le w_m(n)$,
\[
\lambda_1(n,k)=
\begin{cases}
c_n, & 0\le n\le m-1 \text{ and } k=0,\\[4pt]
p_n\lambda_1(n-1,k)-h_{(n,k)}\lambda_1(n-m,k-1),
& n\ge m.
\end{cases}
\]
\end{definition}

\begin{theorem}[Lambda-recursive decomposition by residue classes]\label{thm:lamb-recur-classes}
Let $m\in\mathbb N_+$ and let
$\lambda_1:\mathbb N\times\mathbb Z\to R$
be a lambda-recursive sequence of order $m$ in the sense of
Definition~\textup{\ref{def:lamb-recur}}. For each residue
$r\in\{0,1,\dots,m-1\}$ and all $k\ge0$, write $n = mk + r$
and define, for $0\le t\le k$,
\[
\lambda_1^{\langle r\rangle}(k,t)
=
\lambda_1(mk + r,\,k - t).
\]
Then, for each $r$, the family
$\lambda_1^{\langle r\rangle}:\mathbb N\times\mathbb Z\to R$ satisfies
\[
\lambda_1^{\langle r\rangle}(k,t)=0
\quad\text{whenever}\quad t<0\ \text{or}\ t>k,
\]
and the classwise coefficients satisfy
\begin{align}\label{eq:lamb-classes}
\lambda_1^{\langle r\rangle}(k,t) &= \notag\\
&\hspace*{-4em}\begin{cases}
c_r, & k=0,\ t=0,\\[4pt]
p_{mk+r}\,\lambda_1^{\langle r-1\rangle}(k,t)
-
h_{(mk+r,k-t)}\,\lambda_1^{\langle r\rangle}(k-1,t),
& k\ge1,\ 0\le r\le m-1,
\end{cases}
\end{align}
onde, no caso $r=0$, o termo
$\lambda_1^{\langle r-1\rangle}(k,t)$ é interpretado como
\[
\lambda_1^{\langle -1\rangle}(k,t)
=
\lambda_1(mk-1,\,k-t)
=
\lambda_1\bigl(m(k-1)+(m-1),\,k-t\bigr)
=
\lambda_1^{\langle m-1\rangle}(k-1,t-1).
\]

In particular, solving the global lambda-recursive problem for
$\lambda_1(n,b)$ is equivalent to solving, in parallel, the $m$ coupled
lambda-recursive subproblems given by \eqref{eq:lamb-classes}, one for each
residue class $n\equiv r\pmod m$.
\end{theorem}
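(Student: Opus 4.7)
The plan is to argue by directly unfolding the definition of $\lambda_1^{\langle r\rangle}$ and transporting both the support condition and the global recurrence of Definition~\ref{def:lamb-recur} through the reindexation $n=mk+r$, $b=k-t$. I would split the proof into three short pieces.

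First, I would verify the support claim. Since $w_m(mk+r)=k$ for $0\le r\le m-1$, the triangular-support hypothesis on $\lambda_1$ gives $\lambda_1(mk+r,k-t)=0$ whenever $k-t<0$ or $k-t>k$, which is exactly $t>k$ or $t<0$. This is a purely bookkeeping step but should be done before touching the recurrence, since it justifies that the recursive relation only needs to be checked in the admissible range $0\le t\le k$.

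Second, I would set up the base case and the inductive step. For $k=0$ and $t=0$, one has $\lambda_1^{\langle r\rangle}(0,0)=\lambda_1(r,0)=c_r$ directly from the initial data of Definition~\ref{def:lamb-recur}, since $0\le r\le m-1$. For the general step with $k\ge1$, I would apply the global recurrence to $\lambda_1(mk+r,k-t)$, obtaining
\[
\lambda_1(mk+r,k-t)
=p_{mk+r}\,\lambda_1(mk+r-1,\,k-t)
-h_{(mk+r,k-t)}\,\lambda_1(mk+r-m,\,k-t-1),
\]
and then rewrite each factor in the classwise notation. The second term is immediate: $mk+r-m=m(k-1)+r$ and the internal index $k-t-1=(k-1)-t$, so it equals $\lambda_1^{\langle r\rangle}(k-1,t)$. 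For the first term, the value of $r$ matters. When $r\ge1$, $mk+r-1=mk+(r-1)$ and the internal index $k-t$ is already of the form $k-t$, so this is $\lambda_1^{\langle r-1\rangle}(k,t)$.

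Third, the delicate case is $r=0$, and I expect that to be the single real obstacle in the argument. Here one must rewrite $mk-1=m(k-1)+(m-1)$, so that $\lambda_1(mk-1,k-t)$ becomes $\lambda_1(m(k-1)+(m-1),k-t)$; matching this against $\lambda_1^{\langle m-1\rangle}(k',t')=\lambda_1(mk'+(m-1),k'-t')$ with $k'=k-1$ forces $t'=t-1$, giving $\lambda_1^{\langle m-1\rangle}(k-1,t-1)$. This is precisely the convention $\lambda_1^{\langle -1\rangle}(k,t):=\lambda_1^{\langle m-1\rangle}(k-1,t-1)$ stated in the theorem, so the unified recurrence \eqref{eq:lamb-classes} is valid across all residues. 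Finally, the last assertion about equivalence of the global and the $m$ coupled subproblems follows from the fact that $(n,b)\mapsto(r,k,t)$ with $n=mk+r$, $0\le r<m$, and $t=k-b$ is a bijection between admissible index pairs, together with the observation that the recurrence \eqref{eq:lamb-classes} has been obtained from the defining relation of Definition~\ref{def:lamb-recur} by this bijection alone.
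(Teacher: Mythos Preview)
Your proposal is correct and follows essentially the same approach as the paper's own proof: verify the support claim via $w_m(mk+r)=k$, read off the base case $\lambda_1^{\langle r\rangle}(0,0)=c_r$, transport the global recurrence through the substitution $n=mk+r$, $b=k-t$, and split into the two subcases $r\ge1$ and $r=0$ exactly as you describe. The only cosmetic difference is that the paper phrases the recursive step as ``assume $n\ge m$'' rather than ``$k\ge1$'', but these are equivalent under the reindexation.
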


\begin{proof}
By Definition~\ref{def:suporte-triangular}, $\lambda_1\in\Delta_m$, that is,
$\lambda_1(n,b)=0$ whenever $b<0$ or $b>w_m(n)$, with
$w_m(n)=\lfloor n/m\rfloor$.

Fix a residue $r$ and $k\ge0$, write $n=mk+r$, and define
\[
\lambda_1^{\langle r\rangle}(k,t)
=
\lambda_1(mk+r,k-t).
\]
If $t<0$, then $k-t>k=w_m(mk+r)$; if $t>k$, then $k-t<0$. In both cases,
triangular support implies $\lambda_1^{\langle r\rangle}(k,t)=0$. Hence,
$\lambda_1^{\langle r\rangle}(k,t)=0$ whenever $t<0$ or $t>k$.

For $0\le n\le m-1$ one has $w_m(n)=0$, and writing $n=r$ forces $k=0$, so
\[
\lambda_1^{\langle r\rangle}(0,0)=\lambda_1(r,0)=c_r.
\]

Now assume $n\ge m$. By Definition~\ref{def:lamb-recur}, for every admissible $b$,
\begin{equation}\label{eq:lamb-global}
\lambda_1(n,b)
=
p_n\,\lambda_1(n-1,b)
\;-\;
h_{(n,b)}\,\lambda_1(n-m,b-1).
\end{equation}
Fix $r\in\{0,\dots,m-1\}$ and $k\ge1$, write $n=mk+r$, and let $0\le t\le k$.
Set $b=k-t$ in \eqref{eq:lamb-global} to obtain
\[
\lambda_1(mk+r,\,k-t)
=
p_{mk+r}\,\lambda_1(mk+r-1,\,k-t)
\;-\;
h_{(mk+r,\,k-t)}\,\lambda_1(mk+r-m,\,k-t-1).
\]
By definition, the left-hand side equals $\lambda_1^{\langle r\rangle}(k,t)$.

\medskip\noindent
\textit{Case $1\le r\le m-1$.}
Since $mk+r-1=mk+(r-1)$ and $mk+r-m=m(k-1)+r$, one has
\[
\lambda_1(mk+r-1,\,k-t)=\lambda_1^{\langle r-1\rangle}(k,t),
\qquad
\lambda_1(m(k-1)+r,\,k-t-1)=\lambda_1^{\langle r\rangle}(k-1,t).
\]
Substituting yields the recurrence in \eqref{eq:lamb-classes} for $1\le r\le m-1$.

\medskip\noindent
\textit{Case $r=0$.}
Here $n=mk$, so $mk-1=m(k-1)+(m-1)$ and $mk-m=m(k-1)$. Define
\[
\lambda_1^{\langle -1\rangle}(k,t)=\lambda_1(mk-1,\,k-t).
\]
Then
\[
\lambda_1^{\langle -1\rangle}(k,t)=\lambda_1^{\langle m-1\rangle}(k-1,t-1),
\qquad
\lambda_1(mk-m,\,k-t-1)=\lambda_1^{\langle 0\rangle}(k-1,t).
\]
Substituting these identities yields the recurrence in \eqref{eq:lamb-classes} for $r=0$
(with the stated interpretation of $\lambda_1^{\langle r-1\rangle}$).

Finally, since each $n\ge0$ admits a unique decomposition $n=mk+r$ with $0\le r<m$,
the identity $\lambda_1^{\langle r\rangle}(k,t)=\lambda_1(mk+r,\,k-t)$ shows that the global
family $\lambda_1(n,b)$ is completely determined by the $m$ classwise families
$\lambda_1^{\langle r\rangle}(k,t)$. This proves the final statement.
\end{proof}

\begin{definition}[Admissible lambda-recursive family]
A lambda-recursive family $\lambda_1$ is said to be \emph{admissible} if
\[
\lambda_1(n,0)\in R^\times \quad \text{for all } n\ge 0.
\]
Throughout the theory of inverse and intermediate kernels, admissible
lambda-recursive families are considered.
\end{definition}

\medskip
\noindent
Many classical systems admit expansions of the form
\[
f_n(x)=\sum_{k=0}^{w_m(n)}\lambda_1(n,k)\,x^{n-mk},
\]
with lambda-recursive coefficients. Table~\ref{tab:m1} summarizes the corresponding
$(p_n,h_{(n,k)})$ and typical initial values (for $R=\mathbb R$); normalization
adjustments may be required.

\begin{table}[h]
\centering
\caption{Examples of families with lambda-recursive structure}\label{tab:m1}
\small
\begin{tabular*}{\textwidth}{@{\extracolsep\fill}lccccl}
\toprule
\textbf{Family} & $\boldsymbol{m}$ & $\boldsymbol{w_m(n)}$
& $\boldsymbol{p_n}$ & $\boldsymbol{h_{(n,k)}}$ & \textbf{Initial values}\\
\midrule
Laguerre $L_n$ & $1$ & $n$ &
$-\dfrac{1}{n}$ & $\dfrac{k-2n}{n}$ &
$c_0=1$ \\
Chebyshev $T_n$ & $2$ & $\lfloor n/2\rfloor$ &
$2$ & $1$ &
$c_0=1,\ c_1=1$\\
Chebyshev $U_n$ & $2$ & $\lfloor n/2\rfloor$ &
$2$ & $1$ &
$c_0=1,\ c_1=2$\\
Legendre $P_n$ & $2$ & $\lfloor n/2\rfloor$ &
$\dfrac{2n-1}{n}$ & $\dfrac{n-1}{n}$ &
$c_0=1,\ c_1=1$\\
Hermite (physicists') $H_n$ & $2$ & $\lfloor n/2\rfloor$ &
$2$ & $2n-2$ &
$c_0=1,\ c_1=2$\\
Hermite (probabilists') $He_n$ & $2$ & $\lfloor n/2\rfloor$ &
$1$ & $n-1$ &
$c_0=1,\ c_1=1$\\
Lucas $V_n$ & $2$ & $\lfloor n/2\rfloor$ &
$1$ & $1$ &
$c_0=2,\ c_1=1$\\
Fibonacci $F_n$ & $2$ & $\lfloor n/2\rfloor$ &
$1$ & $1$ &
$c_0=0,\ c_1=1$\\
\bottomrule
\end{tabular*}
\end{table}

In general, whenever a polynomial family $\{p_n(x)\}\subset R[x]$
satisfies a linear three-term recurrence in the index $n$ (for each fixed $x$),
with suitable coefficients, its monomial coefficients can be organized into
a lambda-recursive kernel in the sense of Definition~\ref{def:lamb-recur}.
In the orthogonal case, it is classical that orthogonality implies a
three-term recurrence (see Szeg{\H{o}}~\cite{Szego1975} and Chihara~\cite{Chihara1978}),
and the same structure also encompasses non-orthogonal families (including
combinatorial examples) that satisfy linear recurrences in $n$.

\begin{definition}[Boundary sequence]
Let $\lambda_1=\{\lambda_1(n,k)\}$ be a lambda-recursive family of order $m$.
The \emph{boundary sequence} (or \emph{zero column}) is the sequence
\[
(\lambda_1(n,0))_{n\ge 0}.
\]
\end{definition}

\begin{theorem}[Boundary factorization]\label{theorem borda}
Let $\lambda_1=\{\lambda_1(n,k)\}$ be a lambda-recursive family of order $m$
with principal factor $(p_n)_{n\ge m}$. Then the boundary sequence
$(\lambda_1(n,0))_{n\ge 0}$ is completely determined by the initial values
$\lambda_1(0,0),\dots,\lambda_1(m-1,0)$ and by the sequence $(p_n)_{n\ge m}$.
Equivalently,
\[
\lambda_1(n,0)=\lambda_1(m-1,0)\,\prod_{i=m}^{n} p_i
\qquad(n\ge m).
\]
\end{theorem}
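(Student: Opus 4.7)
The plan is to observe that along the boundary column $k=0$ the lambda-recursive recurrence collapses into a plain multiplicative one-step recurrence in $n$, after which the claimed product formula follows by induction. The central mechanism is that the cross-term $h_{(n,k)}\,\lambda_1(n-m,k-1)$ involves the index $k-1$, which at $k=0$ leaves the triangular support of $\lambda_1$, forcing that term to vanish.

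More precisely, first I would specialize the recurrence in Definition~\ref{def:lamb-recur} to $k=0$ for $n\ge m$, writing
\[
\lambda_1(n,0)=p_n\,\lambda_1(n-1,0)-h_{(n,0)}\,\lambda_1(n-m,-1).
\]
Since $\lambda_1\in\Delta_m$ by Definition~\ref{def:suporte-triangular}, the second term vanishes because $-1<0$, and one is left with the purely multiplicative recurrence
\[
\lambda_1(n,0)=p_n\,\lambda_1(n-1,0),\qquad n\ge m.
\]
This already shows that the boundary sequence from index $m$ onward is determined solely by $\lambda_1(m-1,0)$ together with $(p_n)_{n\ge m}$, so the ``only-data-needed'' part of the claim is immediate.

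Next I would iterate: an easy induction on $n\ge m-1$ telescopes the recurrence into
\[
\lambda_1(n,0)=\lambda_1(m-1,0)\prod_{i=m}^{n} p_i,
\]
where for the base case $n=m-1$ the empty product equals $1$, and the inductive step uses the one-step identity above. This yields the closed-form claim.

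The main ``obstacle'' is merely a bookkeeping check that the triangular-support convention really does kill $\lambda_1(n-m,-1)$ at $k=0$, rather than producing a genuine contribution; once that is observed, the rest is a telescoping argument and involves no further combinatorial or algebraic ingredients. The initial values $\lambda_1(0,0),\dots,\lambda_1(m-2,0)$ are not governed by the recurrence (which starts at $n\ge m$) and simply coincide with the prescribed $c_0,\dots,c_{m-2}$, so they are trivially determined by the initial data alone, consistent with the statement.
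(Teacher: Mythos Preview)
Your proposal is correct and follows essentially the same approach as the paper: specialize the lambda-recurrence to $k=0$, use triangular support to kill the term $\lambda_1(n-m,-1)$, and then telescope the resulting one-step multiplicative recurrence. The only cosmetic difference is that the paper rewrites $p_i=\lambda_1(i,0)/\lambda_1(i-1,0)$ and takes a telescoping product, whereas you iterate by induction; your version has the minor advantage of not invoking invertibility of $\lambda_1(i-1,0)$, which is not part of the hypotheses of this theorem.
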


\begin{proof}
By the recurrence in the column $k=0$, for each $i\ge m$ one has
\[
\lambda_1(i,0)
= p_i\,\lambda_1(i-1,0)\;-\;h_{(i,0)}\,\lambda_1(i-m,-1).
\]
Since $\lambda_1(i-m,-1)=0$ by triangular support (as $-1<0$), it follows that
\[
\lambda_1(i,0)=p_i\,\lambda_1(i-1,0)\qquad(i\ge m).
\]
As $\lambda_1(i-1,0)\in R^\times$ by admissibility, one may rewrite
\[
p_i=\frac{\lambda_1(i,0)}{\lambda_1(i-1,0)}\qquad(i\ge m).
\]

Taking the product from $i=m$ to $i=n$ gives
\[
\prod_{i=m}^{n} p_i
=\prod_{i=m}^{n} \frac{\lambda_1(i,0)}{\lambda_1(i-1,0)}.
\]
By telescopic cancellation, this yields
\[
\prod_{i=m}^{n} p_i
=\frac{\lambda_1(n,0)}{\lambda_1(m-1,0)},
\]
that is,
\[
\lambda_1(n,0)=\lambda_1(m-1,0)\,\prod_{i=m}^{n} p_i\qquad(n\ge m),
\]
as claimed.
\end{proof}

\begin{theorem}\label{thm6}
If $\mathcal{M}_{(n,k)}$ is the algebraic expansion matrix and the base sequence $\lambda_{1}$ is
lambda-recursive with $h_{(n,k)}= h_n$ (independent of $k$), then, for $1 \leq k \leq w_m(n)$,
\begin{equation}\label{eq:Mnk-rec}
\begin{aligned}
|\mathcal{M}_{(n,k)}|
&= \Bigl(\prod_{j=1}^{k} p_{\,n-(j-1)m}\Bigr)\,|\mathcal{M}_{(n-1,k)}|\\
&\quad-\Bigl(\prod_{j=1}^{k-1} p_{\,n-(j-1)m}\Bigr)\,
   h_{\,n-(k-1)m}\,\lambda_{1}(n-km,0)\,|\mathcal{M}_{(n-1,k-1)}|.
\end{aligned}
\end{equation}
\end{theorem}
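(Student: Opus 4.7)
The plan is to reduce $|\mathcal{M}_{(n,k)}|$ by a cascade of determinant-preserving column operations, relying critically on the hypothesis $h_{(n,k)}=h_n$, and then to handle the rightmost column separately via multilinearity. The key observation is that the $(i,j)$-entry of $\mathcal{M}_{(n,k)}$ is $\lambda_1\bigl(n-(j-1)m,\,i-j+1\bigr)$, so the lambda-recursive relation rewrites it as
\[
[\mathcal{M}_{(n,k)}]_{ij}
=p_{n-(j-1)m}\,[\mathcal{M}_{(n-1,k)}]_{ij}
-h_{n-(j-1)m}\,[\mathcal{M}_{(n,k)}]_{i,\,j+1},
\]
for every row $i$. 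Because $h_{(n,k)}=h_n$ is independent of $k$, the factor $h_{n-(j-1)m}$ is constant along column $j$, which is exactly what is needed to encode this as a single column operation; this is where the hypothesis enters in an essential way. Equivalently, $\mathrm{col}_j+h_{n-(j-1)m}\,\mathrm{col}_{j+1}=p_{n-(j-1)m}\,\mathrm{col}_j^{(n-1)}$, where $\mathrm{col}_j^{(n-1)}$ denotes column $j$ of $\mathcal{M}_{(n-1,k)}$.

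Next, I would perform the column operation $\mathrm{col}_j\leftarrow \mathrm{col}_j+h_{n-(j-1)m}\,\mathrm{col}_{j+1}$ for $j=1,2,\dots,k-1$ \emph{in this forward order}. The order matters: when step $j$ is executed, column $j+1$ has not yet been modified, so it still equals the original column $j+1$ of $\mathcal{M}_{(n,k)}$, and the identity above applies. Since such operations do not alter the determinant, the resulting matrix $N$ satisfies $|N|=|\mathcal{M}_{(n,k)}|$, its first $k-1$ columns equal $p_{n-(j-1)m}$ times the corresponding columns of $\mathcal{M}_{(n-1,k)}$, and its last column is unchanged. Factoring out the scalars from columns $1,\dots,k-1$ gives
\[
|\mathcal{M}_{(n,k)}|=\Bigl(\prod_{j=1}^{k-1} p_{n-(j-1)m}\Bigr)\,|N'|,
\]
where $N'$ coincides with $\mathcal{M}_{(n-1,k)}$ in columns $1,\dots,k-1$ and retains $\mathrm{col}_k$ of $\mathcal{M}_{(n,k)}$.

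Finally, I would apply the recurrence entrywise to $\mathrm{col}_k$ of $N'$. Because there is no column $k+1$ in the matrix, the second-term vector $\bigl(\lambda_1(n-km,\,i-k)\bigr)_{i=1}^{k}$ must be analyzed directly: triangular support forces $\lambda_1(n-km,\,i-k)=0$ for $i<k$, while $\lambda_1(n-km,0)$ survives at $i=k$, so this vector reduces to $\lambda_1(n-km,0)\,e_k$. Multilinearity in $\mathrm{col}_k$ then splits $|N'|$ into two determinants: the first, with $\mathrm{col}_k$ equal to the scaled column of $\mathcal{M}_{(n-1,k)}$, contributes $p_{n-(k-1)m}\,|\mathcal{M}_{(n-1,k)}|$; the second, with $\mathrm{col}_k=\lambda_1(n-km,0)\,e_k$, is evaluated by cofactor expansion along the last column, leaving the upper-left $(k-1)\times(k-1)$ block of $\mathcal{M}_{(n-1,k)}$, which is exactly $\mathcal{M}_{(n-1,k-1)}$. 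Multiplying by the scalar prefactor from the previous paragraph yields the claimed recurrence.

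The main obstacle is bookkeeping rather than a conceptual one: verifying that the cascade of column operations is self-consistent in forward order, that the triangular-support cancellations in the last column occur exactly where claimed, and that the hypothesis $h_{(n,k)}=h_n$ is genuinely needed (if $h$ depended on $k$, the column operation would require different row-wise scalings and collapse). A quick sanity check at $k=1$---where the cascade is empty, both empty products equal $1$, and the formula reduces to $\lambda_1(n,1)=p_n\,\lambda_1(n-1,1)-h_n\,\lambda_1(n-m,0)$ under the convention $|\mathcal{M}_{(n-1,0)}|=1$---confirms the edge case.
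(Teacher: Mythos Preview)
The proposal is correct and takes essentially the same approach as the paper: both use the lambda-recursion to decompose each column $A_j$ as $p_{n-(j-1)m}B_j-h_{n-(j-1)m}A_{j+1}$, reduce columns $1,\dots,k-1$ to scaled columns of $\mathcal{M}_{(n-1,k)}$ (you phrase this via determinant-preserving column operations, the paper via multilinearity plus vanishing of a determinant with a repeated column---these are equivalent), and then split the last column by multilinearity, identifying the residual vector as $\lambda_1(n-km,0)\,e_k$ by triangular support. The only cosmetic difference is the packaging of the first $k-1$ steps.
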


\begin{proof}
Work over a commutative ring. Write
\[
\mathcal{M}_{(n,k)}=[A_{1}\mid\dots\mid A_{k}]
\quad\text{and}\quad
\mathcal{M}_{(n-1,k)}=[B_{1}\mid\dots\mid B_{k}],
\]
where, for each $1\le j\le k$, the vectors $A_j,B_j\in R^{k}$ denote the $j$-th column
of the respective matrices. For $1\le j\le k$ and $1\le i\le k$, set
\[
\begin{aligned}
A_j(i)&=\lambda_{1}\bigl(n-(j-1)m,\ i-j+1\bigr),\\
B_j(i)&=\lambda_{1}\bigl(n-1-(j-1)m,\ i-j+1\bigr).
\end{aligned}
\]
Also define, for $1\le j\le k$ and $1\le i\le k$,
\[
C_j(i)=\lambda_1\bigl(n-jm,\ i-j\bigr),
\]
so that $C_j\in R^{k}$ is a column vector.

By lambda-recursivity, for $1\le j\le k$ and $1\le i\le k$,
\begin{align*}
\lambda_1\bigl(n-(j-1)m,\ i-j+1\bigr)
&= p_{\,n-(j-1)m}\,\lambda_1\bigl(n-1-(j-1)m,\ i-j+1\bigr)\\
&\quad - h_{\,n-(j-1)m}\,\lambda_1\bigl(n-jm,\ i-j\bigr).
\end{align*}
Equivalently,
\[
A_j(i) = p_{\,n-(j-1)m}\,B_j(i) - h_{\,n-(j-1)m}\,C_j(i).
\]
Thus, columnwise,
\begin{equation}\label{rec_det}
    A_j \;=\; p_{\,n-(j-1)m}\,B_j \;-\; h_{\,n-(j-1)m}\,C_j\qquad (1\le j\le k).
\end{equation}

Moreover, by the definition of $A_{j+1}$, one has $C_j=A_{j+1}$ for $1\le j\le k-1$.
By linearity of the determinant in the $j$-th column and using $C_j=A_{j+1}$, for $j=1,\dots,k-1$,
\begin{align*}
    |A_1,\dots,A_j,A_{j+1},\dots,A_k|
&= p_{\,n-(j-1)m}\,|A_1,\dots,B_j,A_{j+1},\dots,A_k|\\
&\quad- h_{\,n-(j-1)m}\,|A_1,\dots,C_j,A_{j+1},\dots,A_k|\\
&= p_{\,n-(j-1)m}\,|A_1,\dots,B_j,A_{j+1},\dots,A_k|,
\end{align*}
since the term involving $C_j$ has two equal columns ($j$ and $j{+}1$) and therefore vanishes.
Repeating this procedure for $j=1,\dots,k-1$ yields
\[
|\mathcal{M}_{(n,k)}|
=\Bigl(\prod_{j=1}^{k-1}p_{\,n-(j-1)m}\Bigr)\,|B_1,\dots,B_{k-1},A_k|.
\]

Applying \eqref{rec_det} to the last column $A_k$ (that is, with $j=k$) gives
\[
|B_1,\dots,B_{k-1},A_k|
= p_{\,n-(k-1)m}\,|B_1,\dots,B_k|
\;-\;h_{\,n-(k-1)m}\,|B_1,\dots,B_{k-1},C_k|.
\]
Multiplying by the previously extracted factors, one obtains
\begin{align*}
|\mathcal{M}_{(n,k)}|
&= \Bigl(\prod_{j=1}^{k}p_{\,n-(j-1)m}\Bigr)\,|\mathcal{M}_{(n-1,k)}|\\
&\quad-\Bigl(\prod_{j=1}^{k-1}p_{\,n-(j-1)m}\Bigr)\,
      h_{\,n-(k-1)m}\,|B_1,\dots,B_{k-1},C_k|.
\end{align*}
Since $\lambda_1(n-km,-1)=0$ by triangular support, it follows that
\[
C_k=(0,\dots,0,\lambda_1(n-km,0))^\top.
\]
Factoring $\lambda_1(n-km,0)$ from the last column and identifying the upper block with
$\mathcal{M}_{(n-1,k-1)}$, one obtains
\[
|B_1,\dots,B_{k-1},C_k|=\lambda_1(n-km,0)\,|\mathcal{M}_{(n-1,k-1)}|.
\]
Substituting into the previous expression yields \eqref{eq:Mnk-rec}.
\end{proof}

\begin{theorem}\label{thm7}
If $m\mid n$ and the base sequence of $\mathcal{M}_{(n,k)}$ is lambda-recursive, then
\[
\bigl|\mathcal{M}_{(n-1,\;w_m(n))}\bigr|=0.
\]
\end{theorem}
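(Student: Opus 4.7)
The plan is to show that the last row of $\mathcal{M}_{(n-1,\,w_m(n))}$ consists entirely of zeros, which immediately forces the determinant to vanish. Write $k=w_m(n)=n/m$, using the divisibility hypothesis $m\mid n$, so that the matrix is $k\times k$. By the definition of the algebraic expansion matrix (applied to the first index $n-1$), its $(i,j)$-entry is $\lambda_{1}\!\bigl((n-1)-(j-1)m,\;i-j+1\bigr)$ whenever $j\le i+1$, and zero otherwise.

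First I would read off the bottom row ($i=k$): since $j\le k\le k+1=i+1$ for every $j=1,\dots,k$, no Hessenberg zero is triggered, and the entry at position $(k,j)$ equals $\lambda_{1}\!\bigl(n-1-(j-1)m,\;k-j+1\bigr)$. Second, I would perform the substitution $b:=k-j+1\in\{1,\dots,k\}$; the hypothesis $n=mk$ then collapses the first index to
\[
n-1-(j-1)m \;=\; mk-1-(k-b)m \;=\; mb-1,
\]
so the bottom row is exactly the list $\bigl(\lambda_{1}(mb-1,\,b)\bigr)$ with $b$ ranging over $\{1,\dots,k\}$.

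Finally, I would invoke triangular support. For every $b\ge1$ one has $w_m(mb-1)=\lfloor(mb-1)/m\rfloor=b-1<b$, so $\lambda_{1}(mb-1,\,b)=0$ because $\lambda_{1}\in\Delta_{m}$. Hence every entry of the last row of $\mathcal{M}_{(n-1,\,w_m(n))}$ is zero, and the determinant vanishes, as claimed.

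No real obstacle arises in this argument: the content is simply the observation that the divisibility hypothesis $m\mid n$ is precisely what aligns the bottom-row indices $(mb-1,\,b)$ one step outside the triangular support of $\lambda_{1}$. It is worth noting that the full lambda-recursive hypothesis is not actually used beyond the containment $\lambda_{1}\in\Delta_{m}$; the role of \Cref{thm7} is presumably to feed the vanishing $|\mathcal{M}_{(n-1,\,w_m(n))}|=0$ into the determinantal recurrence of \Cref{thm6} at divisible indices, where it produces a boundary identity for $|\mathcal{M}_{(n,k)}|$.
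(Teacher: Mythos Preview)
Your argument is correct and follows essentially the same route as the paper: both proofs show that the last row of $\mathcal{M}_{(n-1,\,w_m(n))}$ vanishes by checking that each entry has the form $\lambda_1(N,\ell)$ with $\ell=w_m(N)+1$, hence lies outside the triangular support. Your reindexation $b=k-j+1$ makes the final computation $w_m(mb-1)=b-1<b$ slightly cleaner than the paper's version, and your remark that only $\lambda_1\in\Delta_m$ is needed (not the full lambda-recursive structure) is a valid observation.
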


\begin{proof}
Fix $k=w_m(n)$ and consider
\[
\mathcal{M}_{(n-1,k)}=(B_{1},\dots,B_{k}),
\]
where, for $1\le i,j\le k$,
\[
B_j(i)=\lambda_1\bigl(n-1-(j-1)m,\;i-j+1\bigr).
\]
By triangular support, $\lambda_1(N,\ell)=0$ whenever $\ell>w_m(N)$.

The last row (that is, $i=k=w_m(n)$) is given by
\[
b_{w_m(n),\,j}
=\lambda_1\!\bigl(n-(j-1)m-1,\;w_m(n)-j+1\bigr),
\qquad 1\le j\le k.
\]
Since $m\mid n$, there exists $q\in\mathbb N$ such that $n=qm$, hence
\[
w_m(n)=\Big\lfloor \frac{n}{m}\Big\rfloor = \frac{n}{m}=q.
\]
For each $1\le j\le k$, one has
\[
\begin{aligned}
w_m\!\bigl(n-(j-1)m-1\bigr)
&=\Big\lfloor\frac{n-(j-1)m-1}{m}\Big\rfloor
 =\Big\lfloor q-(j-1)-\tfrac{1}{m}\Big\rfloor\\
&= q-j
 = w_m(n)-j.
\end{aligned}
\]
Therefore,
\[
w_m\!\bigl(n-(j-1)m-1\bigr)+1
= w_m(n)-j+1,
\]
and the column index in $b_{w_m(n),\,j}$ is exactly
$w_m\!\bigl(n-(j-1)m-1\bigr)+1$, that is,
\[
b_{w_m(n),\,j}
=\lambda_1\!\bigl(n-(j-1)m-1,\ w_m\!\bigl(n-(j-1)m-1\bigr)+1\bigr).
\]
By triangular support, $\lambda_1(N,\ell)=0$ whenever $\ell>w_m(N)$; hence,
\[
b_{w_m(n),\,j}=0\qquad(1\le j\le k).
\]
Thus the last row of $\mathcal{M}_{(n-1,k)}$ is zero, and therefore
\[
\bigl|\mathcal{M}_{(n-1,k)}\bigr|=0.\qedhere
\]
\end{proof}

\section{Basis inversion in lambda-recursive families}\label{sec:inv}

\medskip
From this point on, it is assumed that $\lambda_1$ is an admissible lambda-recursive family and that
\[
p_n\in R^\times \quad \text{for all } n\ge m.
\]
Thus, the fractions $1/\lambda_1(n,0)$, $1/p_t$, and $h_s/p_s$ appearing in the formulas below
are well defined in~$R$.

\begin{theorem}\label{thm8}
Let
\[
f_{n}(x)=\sum_{b=0}^{w_m(n)}\lambda_{1}(n,b)\,x^{\,n-mb},
\]
where $\lambda_{1}$ is lambda-recursive of order $m$ with principal factor
$(p_{n})_{n\ge m}$ and auxiliary factor independent of $k$, that is,
$h_{(n,k)}= h_{n}$. Assume that there exists a triangular inverse kernel
$\lambda_3\in\Delta_m$ such that, for all $n\in\mathbb N$,
\[
x^{n}=\sum_{b=0}^{w_m(n)}\lambda_{3}(n,b)\,f_{n-mb}(x),
\]
with normalization
\[
\lambda_1(n,0)\,\lambda_3(n,0)=1.
\]
For each pair $(n,k)$, write
\[
t=n-km,\qquad s=n-(k-1)m.
\]
Then, for all $n\in\mathbb N$ and all $k\in\mathbb Z$,
\[
\lambda_{3}(n,k)=
\begin{cases}
   \dfrac{1}{\lambda_{1}(n,0)}, & k=0,\\[8pt]
    A(n,k)\,\lambda_{3}(n-1,k)
    +\dfrac{h_{s}}{p_{s}}\;\lambda_{3}(n-1,k-1), & 1\le k\le w_m(n),
\end{cases}
\]
where
\[
A(n,k)=
\begin{cases}
\dfrac{1}{p_{\,t}}, & t\ge m,\\[6pt]
\dfrac{\lambda_{1}(t-1,0)}{\lambda_{1}(t,0)}, & t \in \{1,\dots,m-1\},\\[8pt]
0, & t=0.
\end{cases}
\]
\end{theorem}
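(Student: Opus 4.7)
The plan is to use the determinantal formula of \cref{thm4} to rewrite $\lambda_3(n,k)$, $\lambda_3(n-1,k)$, and $\lambda_3(n-1,k-1)$ as signed ratios of $\det\mathcal M$ to products of boundary values, inject the determinantal recurrence of \cref{thm6}, and then telescope the boundary products by means of \cref{theorem borda}. The case $k=0$ is immediate from the normalization $\lambda_1(n,0)\lambda_3(n,0)=1$, so all work goes into $1\le k\le w_m(n)$.

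For the inductive step, I would write $D_{n,k}=\det\mathcal M_{(n,k)}$ and $\Pi(n,k)=\prod_{i=0}^{k}\lambda_1(n-im,0)$, so that \cref{thm4} gives $\lambda_3(n,k)=(-1)^{k}D_{n,k}/\Pi(n,k)$ and, analogously, $D_{n-1,k}=(-1)^{k}\lambda_3(n-1,k)\Pi(n-1,k)$ and $D_{n-1,k-1}=(-1)^{k-1}\lambda_3(n-1,k-1)\Pi(n-1,k-1)$. Dividing the recurrence of \cref{thm6} by $\Pi(n,k)$ (and multiplying by $(-1)^k$) converts it into
\[
\lambda_3(n,k)
=\frac{\Pi(n-1,k)}{\Pi(n,k)}\prod_{j=1}^{k}p_{\,n-(j-1)m}\;\lambda_3(n-1,k)
\;+\;\frac{\lambda_1(t,0)\,\Pi(n-1,k-1)}{\Pi(n,k)}\prod_{j=1}^{k-1}p_{\,n-(j-1)m}\;h_s\,\lambda_3(n-1,k-1),
\]
so the proof reduces to identifying each prefactor with $A(n,k)$ and $h_s/p_s$, respectively.

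The key simplification comes from \cref{theorem borda}: whenever $j\ge m$, one has $\lambda_1(j,0)/\lambda_1(j-1,0)=p_j$. For indices $i\in\{0,1,\dots,k-1\}$, the argument $n-im$ is always $\ge s\ge m$, so every such boundary ratio collapses to $1/p_{n-im}$; this is what makes the coefficient of $\lambda_3(n-1,k-1)$ telescope cleanly to $h_s/p_s$. The only index that can fail this lower bound is $i=k$ (appearing in $\Pi(n,k)/\Pi(n-1,k)$), with argument $t=n-km$, and this produces the three cases in the statement: for $t\ge m$ the ratio is $1/p_t$, giving $A(n,k)=1/p_t$; for $1\le t\le m-1$ one cannot invoke \cref{theorem borda} and the ratio remains $\lambda_1(t-1,0)/\lambda_1(t,0)$; for $t=0$ one has $n=km$, so $w_m(n-1)=k-1<k$ and hence $\lambda_3(n-1,k)=0$ by triangular support (equivalently, \cref{thm7} gives $D_{n-1,k}=0$), so the first summand vanishes regardless of how $A(n,k)$ is defined and one may set $A(n,k)=0$.

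The main obstacle is the bookkeeping: one must align the three index ranges in $\prod_{j=1}^{k}p_{\,n-(j-1)m}$, $\Pi(n,k)$, and $\Pi(n-1,k)$ so that the telescoping is transparent, and one must justify the $t=0$ case without dividing by a non-invertible quantity (here \cref{thm7} is essential, since it guarantees that the only potentially ill-defined factor multiplies zero). Once those two points are handled, the matching of coefficients is purely algebraic.
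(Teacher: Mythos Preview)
Your proposal is correct and follows essentially the same route as the paper: invoke \cref{thm4} to express $\lambda_3(n,k)$ as $(-1)^k\det\mathcal M_{(n,k)}/\Pi(n,k)$, plug in the determinantal recurrence of \cref{thm6}, and then telescope the boundary products via $\lambda_1(j,0)=p_j\lambda_1(j-1,0)$ for $j\ge m$, with the three cases for $t=n-km$ arising exactly as you describe. Your handling of the boundary case $t=0$ is in fact slightly more careful than the paper's: you observe that $\Pi(n-1,k)$ would involve $\lambda_1(-1,0)$ and is therefore ill-defined, so one should invoke \cref{thm7} to kill $D_{n-1,k}$ \emph{before} converting to $\lambda_3(n-1,k)$, whereas the paper simply notes $\lambda_3(n-1,k)=0$ by triangular support without commenting on the formal issue with the intermediate expression.
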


\begin{proof}
For $k=0$, the normalization $\lambda_1(n,0)\lambda_3(n,0)=1$ implies
\[
\lambda_{3}(n,0)=\frac{1}{\lambda_{1}(n,0)}.
\]

For $1\le k\le w_m(n)$, use the determinantal expression of Theorem~\ref{thm4}:
\[
\lambda_{3}(n,k)=\frac{(-1)^k\,|\mathcal M_{(n,k)}|}
{\displaystyle\prod_{i=0}^{k}\lambda_{1}(n-im,0)}.
\]
Applying the recurrence for $|\mathcal M_{(n,k)}|$ (Theorem~\ref{thm6}) gives
\begin{align*}
\lambda_{3}(n,k)
&=\frac{(-1)^k}{\prod_{i=0}^{k}\lambda_{1}(n-im,0)}
\biggl(\Bigl[\prod_{i=1}^{k}p_{\,n-(i-1)m}\Bigr]|\mathcal M_{(n-1,k)}|\\[-2pt]
&\qquad-\Bigl[\prod_{i=1}^{k-1}p_{\,n-(i-1)m}\Bigr]
h_{\,n-(k-1)m}\,\lambda_{1}(n-km,0)\,|\mathcal M_{(n-1,k-1)}|\biggr).
\end{align*}

\medskip
\noindent\textbf{First term.}
By the boundary recurrence $\lambda_1(N,0)=p_N\,\lambda_1(N-1,0)$ for $N\ge m$, and since
$k\le w_m(n)$ implies $n-im\ge m$ for $0\le i\le k-1$, one has
\[
\prod_{i=0}^{k}\lambda_{1}(n-im,0)
=\Bigl(\prod_{i=0}^{k-1}p_{\,n-im}\Bigr)\,\lambda_{1}(n-km,0)\,
\Bigl(\prod_{i=0}^{k-1}\lambda_{1}(n-im-1,0)\Bigr).
\]
Hence,
\[
\frac{\displaystyle\prod_{i=1}^{k}p_{\,n-(i-1)m}}{\displaystyle\prod_{i=0}^{k}\lambda_{1}(n-im,0)}
=\frac{1}{\lambda_{1}(n-km,0)}\cdot
\frac{1}{\displaystyle\prod_{i=0}^{k-1}\lambda_{1}(n-im-1,0)}.
\]

Writing $t=n-km$, consider the following cases.

\smallskip
\emph{Case $t\ge m$.} One has $\lambda_{1}(t,0)=p_{\,t}\,\lambda_{1}(t-1,0)$ and
\[
\prod_{i=0}^{k}\lambda_{1}(n-1-im,0)
=\lambda_{1}(t-1,0)\,\prod_{i=0}^{k-1}\lambda_{1}(n-im-1,0).
\]
Thus,
\begin{align*}
\frac{\prod_{i=1}^{k}p_{\,n-(i-1)m}}{\prod_{i=0}^{k}\lambda_{1}(n-im,0)}
&=\frac{1}{\lambda_{1}(t,0)}\cdot\frac{1}{\prod_{i=0}^{k-1}\lambda_{1}(n-im-1,0)}\\
&=\frac{1}{p_t}\cdot\frac{1}{\prod_{i=0}^{k}\lambda_{1}(n-1-im,0)}.
\end{align*}
Therefore,
\[
\frac{(-1)^k}{\prod_{i=0}^{k}\lambda_{1}(n-im,0)}
\Bigl[\prod_{i=1}^{k}p_{\,n-(i-1)m}\Bigr]|\mathcal M_{(n-1,k)}|
=\frac{1}{p_{\,t}}\;\lambda_{3}(n-1,k).
\]

\smallskip
\emph{Case $t\in\{1,\dots,m-1\}$.} Here only
\[
\prod_{i=0}^{k}\lambda_{1}(n-1-im,0)
=\lambda_{1}(t-1,0)\,\prod_{i=0}^{k-1}\lambda_{1}(n-im-1,0)
\]
is used (since $n-1-km=t-1$). Then
\begin{align*}
\frac{\prod_{i=1}^{k}p_{\,n-(i-1)m}}{\prod_{i=0}^{k}\lambda_{1}(n-im,0)}
&=\frac{1}{\lambda_{1}(t,0)}\cdot\frac{1}{\prod_{i=0}^{k-1}\lambda_{1}(n-im-1,0)}\\
&=\frac{\lambda_{1}(t-1,0)}{\lambda_{1}(t,0)}\cdot
\frac{1}{\prod_{i=0}^{k}\lambda_{1}(n-1-im,0)}.
\end{align*}
Therefore,
\[
\frac{(-1)^k}{\prod_{i=0}^{k}\lambda_{1}(n-im,0)}
\Bigl[\prod_{i=1}^{k}p_{\,n-(i-1)m}\Bigr]|\mathcal M_{(n-1,k)}|
=\frac{\lambda_{1}(t-1,0)}{\lambda_{1}(t,0)}\;\lambda_{3}(n-1,k).
\]

\medskip
\noindent\textbf{Second term.}
For the term involving $|\mathcal M_{(n-1,k-1)}|$, one has
\[
\begin{aligned}
&-\frac{(-1)^{k}}{\prod_{i=0}^{k}\lambda_{1}(n-im,0)}
 \biggl(\prod_{i=1}^{k-1}p_{\,n-(i-1)m}\biggr)
 h_{\,n-(k-1)m}\,\lambda_{1}(n-km,0)\,|\mathcal M_{(n-1,k-1)}|\\
&\qquad=\frac{(-1)^{k+1}\bigl(\prod_{i=1}^{k-1}p_{\,n-(i-1)m}\bigr)
 h_{\,n-(k-1)m}\,\lambda_{1}(n-km,0)\,|\mathcal M_{(n-1,k-1)}|}
{\Bigl(\prod_{i=0}^{k-1}p_{\,n-im}\Bigr)\,\lambda_{1}(n-km,0)\,
\Bigl(\prod_{i=0}^{k-1}\lambda_{1}(n-im-1,0)\Bigr)}.
\end{aligned}
\]
Using
\(
\prod_{i=1}^{k-1}p_{\,n-(i-1)m}=\prod_{i=0}^{k-2}p_{\,n-im}
\),
this simplifies to
\[
\frac{(-1)^{k+1}h_{\,n-(k-1)m}\,|\mathcal M_{(n-1,k-1)}|}
     {p_{\,n-(k-1)m}\,\prod_{i=0}^{k-1}\lambda_{1}(n-im-1,0)}.
\]
Since
\[
\prod_{i=0}^{k-1}\lambda_{1}(n-1-im,0)
=\prod_{i=0}^{k-1}\lambda_{1}(n-im-1,0),
\]
the determinantal formula for $\lambda_3(n-1,k-1)$ yields
\[
\lambda_3(n-1,k-1)
=\frac{(-1)^{k-1}|\mathcal M_{(n-1,k-1)}|}
{\displaystyle\prod_{i=0}^{k-1}\lambda_1(n-1-im,0)}.
\]
Therefore,
\begin{align*}
&-\frac{(-1)^{k}}{\prod_{i=0}^{k}\lambda_{1}(n-im,0)}
 \biggl(\prod_{i=1}^{k-1}p_{\,n-(i-1)m}\biggr)
 h_{\,n-(k-1)m}\,\lambda_{1}(n-km,0)\,|\mathcal M_{(n-1,k-1)}|\\
&\qquad=\frac{h_{\,s}}{p_{\,s}}\;\lambda_{3}(n-1,k-1),
\end{align*}
where $s=n-(k-1)m$ (note that $s\ge m$, hence $p_s\in R^\times$ by assumption).

\medskip
\noindent\textbf{Case $t=0$.}
When $t=n-km=0$, one has $m\mid n$ and $k=w_m(n)$, hence $w_m(n-1)=k-1$. By triangular support,
$\lambda_3(n-1,k)=0$, so the contribution of the first term vanishes automatically. Defining $A(n,k)=0$
in this case yields a uniform statement.

\medskip
Collecting the three subcases for $t=n-km$ (namely, $t\ge m$, $t\in\{1,\dots,m-1\}$, and $t=0$),
the recurrence follows.
\end{proof}

\begin{theorem}[Symmetric change of basis]\label{thm11}
Fix $m\in\mathbb N_+$ and write $w_m(n)=\lfloor n/m\rfloor$.
Consider the families
\[
f_{n}(x)=\sum_{b=0}^{w_m(n)} \lambda_{1}(n,b)\,x^{\,n-mb},
\qquad
g_{n}(x)=\sum_{c=0}^{w_m(n)} \mu_{1}(n,c)\,x^{\,n-mc},
\]
with triangular support (zero entries outside $0\le k\le w_m(n)$) and initial columns
$\lambda_1(r,0),\mu_1(r,0)\in R^\times$ for $0\le r\le m-1$.
Assume that $\lambda_1$ and $\mu_1$ are \emph{lambda-recursive families of order $m$} with parameters
\[
\lambda_{1}(n,k)=
\begin{cases}
c^{(1)}_n, & 0\le n\le m-1,\ k=0,\\[2pt]
p^{(1)}_{n}\,\lambda_{1}(n-1,k)\;-\;h^{(1)}_{n}\,\lambda_{1}(n-m,k-1), & n\ge m,\ 0\le k\le w_m(n),
\end{cases}
\]
\[
\mu_{1}(n,k)=
\begin{cases}
c^{(2)}_n, & 0\le n\le m-1,\ k=0,\\[2pt]
p^{(2)}_{n}\,\mu_{1}(n-1,k)\;-\;h^{(2)}_{n}\,\mu_{1}(n-m,k-1), & n\ge m,\ 0\le k\le w_m(n),
\end{cases}
\]
where $p^{(i)}_{n},h^{(i)}_{n}\in R$ may depend on $n$ (but are independent of $k$), and invertibility is assumed whenever they appear in denominators.

Write $t=n-km$, $s=n-(k-1)m$ and define the boundary factors
\[
A_\mu(n,k)=
\begin{cases}
\dfrac{1}{p^{(2)}_{t}}, & t\ge m,\\
\dfrac{\mu_{1}(t-1,0)}{\mu_{1}(t,0)}, & 1\le t<m,\\
0, & t=0,
\end{cases}
\quad
A_\lambda(n,k)=
\begin{cases}
\dfrac{1}{p^{(1)}_{t}}, & t\ge m,\\
\dfrac{\lambda_{1}(t-1,0)}{\lambda_{1}(t,0)}, & 1\le t<m,\\
0, & t=0.
\end{cases}
\]

\medskip
\noindent\textbf{Direction $f\Rightarrow g$.}
Assume that there exists an inverse kernel $\mu_3=\{\mu_3(n,k)\}$ associated with $g$, that is, for all $n \in \mathbb{N}$,
\[
x^{n}=\sum_{k=0}^{w_m(n)} \mu_{3}(n,k)\,g_{n-mk}(x),
\qquad
\mu_{3}(n,0)=\frac{1}{\mu_{1}(n,0)}.
\]
Assume further that, for $1\le k\le w_m(n)$,
\[
\mu_{3}(n,k)=A_\mu(n,k)\,\mu_{3}(n-1,k)\;+\;\frac{h^{(2)}_{\,s}}{p^{(2)}_{\,s}}\,\mu_{3}(n-1,k-1),
\]
where $s=n-(k-1)m$.

Defining
\[
z(n,k)=\sum_{b=0}^{k}\lambda_{1}(n,b)\,\mu_{3}(n-mb,k-b),
\]
one obtains the change-of-basis expansion
\[
f_{n}(x)=\sum_{k=0}^{w_m(n)} g_{n-mk}(x)\,z(n,k),
\]
where $z=\{z(n,k)\}\in\Delta_m$ and $z(n,0)=\lambda_1(n,0)/\mu_1(n,0)$.
Moreover, for $1\le k\le w_m(n)$, the coefficients $z(n,k)$ satisfy the recurrence
\[
z(n,k)=
p^{(1)}_{n}\,A_\mu(n,k)\,z(n-1,k)
\;+\;
p^{(1)}_{n}\,\frac{h^{(2)}_{\,s}}{p^{(2)}_{\,s}}\,z(n-1,k-1)
\;-\;
h^{(1)}_{n}\,z(n-m,k-1).
\]

\medskip
\noindent\textbf{Direction $g\Rightarrow f$.}
Symmetrically, assume that there exists an inverse kernel $\lambda_3=\{\lambda_3(n,k)\}$ associated with $f$, that is, for all $n \in \mathbb{N}$,
\[
x^{n}=\sum_{k=0}^{w_m(n)} \lambda_{3}(n,k)\,f_{n-mk}(x),
\qquad
\lambda_{3}(n,0)=\frac{1}{\lambda_{1}(n,0)}.
\]
Assume further that, for $1\le k\le w_m(n)$,
\[
\lambda_{3}(n,k)=A_\lambda(n,k)\,\lambda_{3}(n-1,k)\;+\;\frac{h^{(1)}_{\,s}}{p^{(1)}_{\,s}}\,\lambda_{3}(n-1,k-1).
\]

Defining
\[
g_{n}(x)=\sum_{k=0}^{w_m(n)} f_{n-mk}(x)\,y(n,k),
\qquad
y(n,k)=\sum_{b=0}^{k}\mu_{1}(n,b)\,\lambda_{3}(n-mb,k-b),
\]
one again obtains a change-of-basis expansion, with $y=\{y(n,k)\}\in\Delta_m$
and $y(n,0)=\mu_1(n,0)/\lambda_1(n,0)$, and, for $1\le k\le w_m(n)$,
\[
y(n,k)=
p^{(2)}_{n}A_\lambda(n,k)y(n-1,k)
+
p^{(2)}_{n}\frac{h^{(1)}_{\,s}}{p^{(1)}_{\,s}}y(n-1,k-1)
-
h^{(2)}_{n}\,y(n-m,k-1).
\]
\end{theorem}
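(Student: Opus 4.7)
The plan is to derive the recurrence for $z(n,k)$ by combining the lambda-recursion on $\lambda_1$ with the hypothesized recurrence on $\mu_3$, after first obtaining the change-of-basis expansion itself from Corollary~\ref{thm:composicao}.

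As preparation, I would verify three quick items. The expansion $f_n(x) = \sum_{k=0}^{w_m(n)} g_{n-mk}(x)\,z(n,k)$ with $z(n,k) = \sum_{b=0}^{k} \lambda_1(n,b)\,\mu_3(n-mb,k-b)$ is the direct conclusion of Corollary~\ref{thm:composicao} applied to $\lambda_1$ and $\mu_3$. The triangular support $z\in\Delta_m$ follows from $\lambda_1,\mu_3\in\Delta_m$: if $k>w_m(n)$, each summand has either $\lambda_1(n,b)=0$ or, using $w_m(n-mb)=w_m(n)-b$, $\mu_3(n-mb,k-b)=0$. The initial value $z(n,0)=\lambda_1(n,0)\mu_3(n,0)=\lambda_1(n,0)/\mu_1(n,0)$ is read off from the normalization of $\mu_3$.

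For the recurrence in the range $1\le k\le w_m(n)$ (which forces $n\ge m$), I would expand the definition of $z(n,k)$ via the lambda-recursion $\lambda_1(n,b) = p^{(1)}_n\lambda_1(n-1,b) - h^{(1)}_n\lambda_1(n-m,b-1)$, producing two sums. Reindexing the second sum by $b\mapsto b+1$, together with $\lambda_1(n-m,-1)=0$, collapses it directly to $-h^{(1)}_n\,z(n-m,k-1)$. The main step is the first sum, where I would apply the hypothesized $\mu_3$-recurrence to each factor $\mu_3(n-mb,k-b)$. The key observation, and the heart of the proof, is that the quantities $t=n-km$ and $s=n-(k-1)m$ are \emph{invariant} under the substitution $(n,k)\mapsto(n-mb,k-b)$; hence $A_\mu(n-mb,k-b)=A_\mu(n,k)$ and the ratio $h^{(2)}_{s}/p^{(2)}_{s}$ does not depend on $b$. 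Consequently these factors pull out of the summation, and the resulting inner sums are recognized, via the very definition of $z$ together with triangular support, as $z(n-1,k)$ and $z(n-1,k-1)$.

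Assembling the two pieces produces the announced recurrence. Two edge cases deserve a sanity check: when $t=0$ (equivalently $m\mid n$ and $k=w_m(n)$), the factor $A_\mu(n,k)=0$ is consistent with $z(n-1,k)=0$ by triangular support since $w_m(n-1)=k-1$; and when the $\mu_3$-recurrence is applied at the boundary index $k-b=0$ (i.e.\ $b=k$), the identity $\mu_3(N,0)=A_\mu(N,0)\,\mu_3(N-1,0)$ is verified directly from $\mu_3(N,0)=1/\mu_1(N,0)$ using the boundary factorization of Theorem~\ref{theorem borda}, the only genuine failure point $N=0$ being protected by the vanishing of the accompanying $\lambda_1(n-1,k)$. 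The symmetric direction $g\Rightarrow f$ is obtained by the same argument with the roles of $(\lambda_1,\mu_3)$ and $(\mu_1,\lambda_3)$ interchanged. The main obstacle throughout is the $b$-invariance of $t$ and $s$, which is precisely what makes the $\mu_3$-recursion factor cleanly across the convolution sum defining $z$.
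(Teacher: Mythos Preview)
Your proposal is correct and follows essentially the same approach as the paper: split $z(n,k)$ via the $\lambda_1$-recursion, reindex the $h^{(1)}_n$-piece to $z(n-m,k-1)$, and exploit the $b$-invariance of $t$ and $s$ to factor $A_\mu(n,k)$ and $h^{(2)}_s/p^{(2)}_s$ out of the remaining sum. The only cosmetic difference is that the paper separates the term $b=k$ before invoking the $\mu_3$-recurrence (which is stated only for $k\ge1$) and then shows the resulting extra term $p^{(1)}_n\lambda_1(n-1,k)\bigl(\mu_3(t,0)-A_\mu(n,k)\mu_3(t-1,0)\bigr)$ vanishes case by case on $t$, whereas you extend the $\mu_3$-recurrence to the index $k-b=0$ and verify it there; the underlying case analysis is identical.
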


\begin{proof}
It suffices to prove the direction $f\Rightarrow g$, since the reverse direction follows by symmetry by exchanging
$(\lambda_1,p^{(1)}_{n},h^{(1)}_{n})$ with $(\mu_1,p^{(2)}_{n},h^{(2)}_{n})$.

Fix $\mu_3$ as the inverse kernel of $g$ and define $z(n,k)$ by
\[
f_{n}(x)=\sum_{k=0}^{w_m(n)} g_{n-mk}(x)\,z(n,k).
\]

One has
\[
f_{n}(x)=\sum_{b=0}^{w_m(n)} \lambda_{1}(n,b)\,x^{\,n-mb},
\quad
x^{n}=\sum_{c=0}^{w_m(n)} \mu_{3}(n,c)\,g_{n-mc}(x),
\]
with $\mu_{3}(n,0)=1/\mu_{1}(n,0)$.
Applying the second identity to each $x^{\,n-mb}$ in $f$ and reindexing by $k=b+c$
(Proposition~\ref{prop:conv-reindex}), one obtains
\[
\begin{aligned}
f_{n}(x)
&=\sum_{b=0}^{w_m(n)} \lambda_{1}(n,b)\!\!\sum_{c=0}^{w_m(n-mb)} \mu_{3}(n-mb,c)\,g_{n-m(b+c)}\bigl(x\bigr)\\
&=\sum_{k=0}^{w_m(n)} g_{n-mk}(x)\,\sum_{b=0}^{k}\lambda_{1}(n,b)\,\mu_{3}(n-mb,k-b)\\
&=\sum_{k=0}^{w_m(n)} g_{n-mk}(x)\,z(n,k),
\end{aligned}
\]
where
\[
z(n,k)=\sum_{b=0}^{k} \lambda_{1}(n,b)\,\mu_{3}(n-mb,k-b).
\]
By convention, $z(n,k)=0$ whenever $k<0$ or $k>w_m(n)$, and for $k=0$ one has
\[
z(n,0)=\lambda_{1}(n,0)\,\mu_{3}(n,0)=\frac{\lambda_{1}(n,0)}{\mu_{1}(n,0)}.
\]

Now fix $1\le k\le w_m(n)$ and use the recurrence for $\lambda_1$:
\[
\lambda_1(n,b)=p^{(1)}_{n}\lambda_1(n-1,b)-h^{(1)}_{n}\lambda_1(n-m,b-1),
\qquad 0\le b\le k.
\]
Substituting into $z(n,k)$ gives $z(n,k)=z_{1}(n,k)-z_{2}(n,k)$ with
\begin{align*}
    z_{1}(n,k)&=p^{(1)}_{n}\sum_{b=0}^{k}\lambda_1(n-1,b)\,\mu_3(n-mb,k-b),\\
    z_{2}(n,k)&=h^{(1)}_{n}\sum_{b=0}^{k}\lambda_1(n-m,b-1)\,\mu_3(n-mb,k-b).
\end{align*}

\emph{Second term.}
The term $b=0$ in $z_2$ vanishes since $\lambda_1(n-m,-1)=0$. With the change of index $c=b-1$,
\[
\begin{aligned}
z_{2}(n,k)
&=h^{(1)}_{n}\sum_{c=0}^{k-1}\lambda_1(n-m,c)\,\mu_3\bigl(n-m(c+1),k-(c+1)\bigr)\\
&=h^{(1)}_{n}\sum_{c=0}^{k-1}\lambda_1(n-m,c)\,\mu_3\bigl((n-m)-mc,(k-1)-c\bigr)\\
&=h^{(1)}_{n}\,z(n-m,k-1).
\end{aligned}
\]

\emph{First term.}
Let $s=n-(k-1)m$. Separate the term $b=k$ and treat the terms $b\le k-1$ via the recurrence for $\mu_3$.
For $t=n-km$, note that
\[
A_\mu(n,k)=A_\mu(n-mb,k-b),
\qquad
\frac{h^{(2)}_{\,s}}{p^{(2)}_{\,s}}
=\frac{h^{(2)}_{\, (n-mb)-((k-b)-1)m}}{p^{(2)}_{\, (n-mb)-((k-b)-1)m}},
\]
so these quantities are independent of $b$. Hence
\[
\begin{aligned}
z_{1}(n,k)
&=p^{(1)}_{n}\lambda_1(n-1,k)\,\mu_3(n-mk,0)\\
&\quad+p^{(1)}_{n}A_\mu(n,k)\sum_{b=0}^{k-1}\lambda_1(n-1,b)\,\mu_3(n-mb-1,k-b)\\
&\quad+p^{(1)}_{n}\frac{h^{(2)}_{\,s}}{p^{(2)}_{\,s}}
\sum_{b=0}^{k-1}\lambda_1(n-1,b)\,\mu_3(n-mb-1,k-b-1).
\end{aligned}
\]
The sums are identified as
\[
\sum_{b=0}^{k-1}\lambda_1(n-1,b)\,\mu_3(n-mb-1,k-b)
=z(n-1,k)-\lambda_1(n-1,k)\,\mu_3(n-mk-1,0),
\]
\[
\sum_{b=0}^{k-1}\lambda_1(n-1,b)\,\mu_3(n-mb-1,k-b-1)=z(n-1,k-1).
\]
Therefore,
\[
\begin{aligned}
z_{1}(n,k)
&=p^{(1)}_{n}A_\mu(n,k)\,z(n-1,k)
+p^{(1)}_{n}\frac{h^{(2)}_{\,s}}{p^{(2)}_{\,s}}\,z(n-1,k-1)\\
&\quad+p^{(1)}_{n}\lambda_1(n-1,k)\Bigl(\mu_3(n-mk,0)-A_\mu(n,k)\,\mu_3(n-mk-1,0)\Bigr).
\end{aligned}
\]

The parenthetical term vanishes. Write again $t=n-km$.

\smallskip
\emph{Case $t\ge m$.}
Since $\mu_1(t,0)=p^{(2)}_{t}\,\mu_1(t-1,0)$, one has
\[
\mu_3(t,0)=\frac{1}{\mu_1(t,0)}
=\frac{1}{p^{(2)}_{t}}\,\mu_3(t-1,0)
=A_\mu(n,k)\,\mu_3(t-1,0),
\]
so the parenthetical term is zero.

\smallskip
\emph{Case $1\le t\le m-1$.}
By definition $A_\mu(n,k)=\mu_1(t-1,0)/\mu_1(t,0)$ and, since
$\mu_3(t,0)=1/\mu_1(t,0)$ and $\mu_3(t-1,0)=1/\mu_1(t-1,0)$, one again obtains
$\mu_3(t,0)=A_\mu(n,k)\,\mu_3(t-1,0)$.

\smallskip
\emph{Case $t=0$.}
Then $n=km$, hence $k=w_m(n)$ and $w_m(n-1)=k-1$, so $k>w_m(n-1)$; by triangular support,
$\lambda_1(n-1,k)=0$, and the remaining term is automatically zero.

\smallskip
Consequently,
\[
z_{1}(n,k)
=
p^{(1)}_{n}A_\mu(n,k)\,z(n-1,k)
+
p^{(1)}_{n}\frac{h^{(2)}_{\,s}}{p^{(2)}_{\,s}}\,z(n-1,k-1).
\]

\medskip
\emph{Conclusion.}
Since $z(n,k)=z_1(n,k)-z_2(n,k)$, it follows that, for $1\le k\le w_m(n)$,
\[
z(n,k)=
p^{(1)}_{n}\,A_\mu(n,k)\,z(n-1,k)
\;+\;
p^{(1)}_{n}\,\frac{h^{(2)}_{\,s}}{p^{(2)}_{\,s}}\,z(n-1,k-1)
\;-\;
h^{(1)}_{n}\,z(n-m,k-1),
\]
as claimed.
The direction $g\Rightarrow f$ is obtained by symmetrically exchanging
\[
(\lambda_1,p^{(1)}_{n},h^{(1)}_{n},A_\lambda)\quad\longleftrightarrow\quad(\mu_1,p^{(2)}_{n},h^{(2)}_{n},A_\mu),
\]
which reproduces the formulas in the statement.
\end{proof}

\section{Conclusions and perspectives}\label{sec:conc}

This work develops a unified framework for polynomial families
\[
f_{n}(x)=\sum_{b=0}^{w_m(n)}\lambda_1(n,b)\,x^{n-mb}
\]
with triangular support of order $m$ over a commutative ring $R$. Starting from a direct kernel $\lambda_1$ and the function $w_m$, we establish a triangular inversion theorem (\Cref{thm:inversao-direta-base}) that characterizes—via a discrete orthogonality condition—when an inverse kernel $\lambda_3$ exists implementing the change of basis $x^n\leftrightarrow f_{n}(x)$. Uniqueness of inversion coefficients is proved, determinantal formulas are obtained via the algebraic expansion matrix $\mathcal M_{(n,k)}$, and a composition mechanism is described for inversions and basis changes, including across distinct orders $m_1,m_2$.

A second main contribution is the introduction of lambda-recursive sequences of order $m$ and the resulting triangular structure. Given a principal factor $(p_n)$ and auxiliary factors $(h_{(n,k)})$, the direct kernel $\lambda_1(n,k)$ is controlled by boundary data and classwise parameters: the boundary sequence $\lambda_1(n,0)$ is determined by $(p_n)$ (\Cref{theorem borda}), and local triangular recurrences are derived for the inverse-kernel coefficients $\lambda_3(n,k)$ (\Cref{thm8}). This makes transparent how variations in boundary data and in the ratios $h_n/p_n$ propagate through determinantal formulas and recursive updates.

From a structural viewpoint, the results indicate that much of the information of a lambda-recursive family is concentrated in boundary data and classwise parameters, while the triangular expansion matrix canonically organizes compatible normalizations of the same inversion mechanism. This perspective recovers several classical examples—including orthogonal polynomials and combinatorial sequences—and provides systematic, mechanically derived recurrences for connection and change-of-basis coefficients.

Regarding perspectives, the framework suggests both algebraic and computational developments. Algebraically, triangular kernels and the expansion matrix can be viewed as universal operators encoding connection coefficients between adapted bases. Computationally, the classwise recurrences for $\lambda_3(n,k)$ and for basis-change coefficients suggest structured generalizations of Clenshaw-type schemes for recursive bases of order $m$, with potential applications to evaluation, interpolation, and connections of orthogonal systems. These directions—covering triangular convolution algebras, connection coefficients, and structured Clenshaw-type schemes—will be explored in future work.

\bibliographystyle{amsplain}
\bibliography{refs}

\end{document}